\newcommand{\R}{{\mathbb R}}
\newcommand{\EE}{{\mathbb E}}
\newcommand{\PP}{{\mathbb P}}
\newcommand{\N}{{\mathbb N}}
\newcommand{\cX}{{\cal{X}}}
\newcommand{\cN}{{\cal{N}}}
\newcommand{\uu}{{\mathbf{u}}}
\newcommand{\1}{{\mathbf{1}}}
\newtheorem{theorem}{Theorem}[section]
\newtheorem{proposition}[theorem]{Proposition}
\newtheorem{corollary}{Corollary}[theorem]
\begin{document}
\begin{center}
{\Large\bf A non-homogeneous Semi-Markov model for
Interval Censoring}\\[0.2in]
{\large\sc M.N.M.\ van Lieshout} and {\large\sc R.L.\ Markwitz}\\[0.2in]
{\footnotesize\em
Centrum Wiskunde \& Informatica (CWI) \\
P.O.~Box 94079, NL-1090 GB Amsterdam, The Netherlands}\\[0.1in]
{\footnotesize\em
Department of Applied Mathematics, 
University of Twente \newline
P.O.~Box 217, NL-7500 AE, Enschede, The Netherlands}
\end{center}

\begin{verse}
{\footnotesize
\noindent
Previous approaches to modelling interval-censored 
data have often relied on assumptions of homogeneity in the
sense that the censoring mechanism, the underlying distribution 
of occurrence times, or both, are assumed to be time-invariant. 
In this work, we introduce a model which allows for non-homogeneous
behaviour in both cases. In particular, we outline a censoring 
mechanism based on semi-Markov processes in which interval 
generation is assumed to be time-dependent and we propose a 
Markov point process model for the underlying occurrence time 
distribution. We prove the existence of this process and derive 
the conditional distribution of the occurrence times given the 
intervals. We provide a framework within which the process can 
be accurately modelled, and subsequently compare our model to 
homogeneous approaches by way of a parametric example.\\[0.2in]

\noindent
{\em AMS Mathematics Subject Classification (2020 Revision):}
60G55, 60K15.

\noindent
{\em Key words \& Phrases:} inhomogeneity, interval-censoring, 
marked temporal 
point process, Markov point process, semi-Markov process.}
\end{verse}

\section{Introduction}
\label{sec:intro}

In previous work \cite{LiesMark23}, we developed statistical 
methods for state estimation on interval-censored data. The 
motivating example was that of determining the occurrence times 
of residential burglaries based on police reports. In the 
criminology literature, such data are known as aoristic crime 
data \cite{Ratcliffe02, Ratcliffe98}. Aoristic crime studies 
have mainly focused on ad hoc methods \cite{Ashby13}, which can
be helpful but may miss dependencies such as the near-repeat 
effect \cite{Bernasco09}. We developed a Bayesian statistical 
method that can account for inter-event dependencies 
\cite{LiesMark23}.

Our approach assumed that for each event occurrence the
censoring mechanism is governed by a stochastic process.
Specifically, an alternating renewal process was used to 
split time up into observable and partially observable periods 
according to the two phases of the renewal process. Either 
the event is fully observed, in which case the exact time of 
occurrence is recorded, or only the interval between two 
jumps is recorded. The approach was shown to lead to a 
tractable mark distribution and is therefore amenable to 
Monte Carlo methods for simulation. 

The censoring mechanism based on alternating renewal processes 
imposes time-homogeneity. In reality, events rarely occur 
homogeneously in time. For instance, returning to the motivating 
example, there may be times of day that are more likely to be 
censored due to the periodic behaviour of potential victims, 
such as being at work or asleep. Additionally, burglars may 
choose to commit crimes at different rates at certain times of 
the day based on their perception of victim behaviour. Thus, 
there may be inhomogeneity in both the underlying distribution 
of occurrence times and the censoring mechanism. 

This paper introduces a new model that rectifies these
shortcomings. For the censoring mechanism, we propose a
non-homogeneous semi-Markov process \cite{Janssen13,Janssen06,KoroSwis95,Ross96}. Conditional 
intensity-based methods \cite{Haezendonck80} are used
to guarantee existence and we derive the joint, marginal 
and conditional distributions of starting point 
and length for each occurrence time. We then propose a marked 
point process model \cite{Daley03} for the complete data
using a non-homogeneous Markov point process \cite{Lies00} 
for the ground process of event occurrences and a mark kernel 
based on the semi-Markov process. We 
illustrate the model by means of parametric examples that 
can describe various types of non-homogeneous behaviour, 
culminating in a comparison of non-homogeneous and homogeneous 
models.

The plan of this paper is as follows. 
In Section~\ref{sec:semimark-sub} we recall the definition
of a semi-Markov process on the half line and give an 
explicit expression for the joint distribution of age and
excess. In Section~\ref{sec:complete_model} we formulate
our marked point process model and study the conditional
distribution of the ground process given the union of marks. 
In Section~\ref{sec:example} we present some parametric
examples; a demonstration of the model in action is given in Section~\ref{sec:simulation}.

\section{The non-homogeneous semi-Markov process}
\label{sec:semimark-sub}

\subsection{Definition and notation}
\label{sec:defs}

Let $(\Omega, \mathcal{A}, {P})$ be a probability space.
Consider the two-dimensional stochastic process $(S_i, X_i)$, 
$i \in \mathbb{N}_0$, on $(\Omega, \mathcal{A}, {P})$ 
with values in $\{ 0, 1 \} \times \mathbb{R}^+$. Here, $S_i$
denotes the $i$-th state that the process is in and 
$0 = X_0 \leq X_1 \leq \dots$ are the jump times.
Call a time interval that the process spends in state $0$ 
a $Z$-phase and state $1$ a $Y$-phase, in 
analogy to \cite{LiesMark23}. We set $S_0 = 1$, as is convention.

The tuple $(S_n, X_n)_{n=1}^{\infty}$ defines a 
\textit{non-homogeneous semi-Markov process} if
\begin{align}
    \mathbb{P}(S_{n+1} = j&, X_{n+1} \leq x\,|\, (S_0, X_0) = 
    (s_0, x_0), \dots, (S_n, X_n) = (s_n, x_n)) \nonumber \\
    &= \mathbb{P}(S_{n+1} = j, X_{n+1} \leq x\,|\, (S_n, X_n) = (s_n, x_n)) \nonumber \\
    &= \mathbb{P}(S_{n+1} = j, X_{n+1} - X_n \leq x - x_n \,|\, (S_n, X_n) = (s_n, x_n)) \nonumber \\
    &= \mathbb{P}(S_{1} = j, X_{1} - X_0 \leq x - x_0 \,|\, (S_0, X_0) = (s_0, x_0)),
    \label{eq:markov_renewal}
\end{align}
i.e.\ the joint conditional probability of the sojourn time
$T_{n+1} = X_{n+1} - X_{n}$ in the $n$-th state
and the next state $S_{n+1}$ depends only on the $n$-th state
$S_n$ and its jump time $X_n$, not on the entire history
of the process \cite{Cinlar69, Janssen06, KoroSwis95, Ross96}
nor on the index $n$. 
This process is 
only Markov at the jump times, hence the name 
\textit{semi-Markov}.

It follows from (\ref{eq:markov_renewal}) that
the distribution of a non-homogeneous semi-Markov 
process is completely specified by the \textit{starting state} 
(or its probability distribution) and a
\textit{semi-Markov kernel} $G$ that describes the 
transition rates from state $i$ to state $j$. Formally, 
for $\tau \geq 0$, $x\geq 0$ and $i,j \in \{ 0,1\}$,
\begin{equation}
    G_{ij}(x, \tau) = \mathbb{P}(S_{n+1} = j,\,T_{n+1} \leq \tau \,|\,S_{n} = i, X_{n} = x),
    \label{eq:semi_kernel}
\end{equation}
regardless of $n = 0, 1, \dots$. As the process alternates, we can write $G_{10}(x, \tau) = 
G_{Y}(x, \tau)$ and $G_{01}(x, \tau) = G_{Z}(x, \tau)$, 
the subscript denoting the state that the process is in 
after jump time $x$. 
In the remainder of this paper, we shall assume that,
for all $x\geq 0$, $G_Y(x, \cdot)$ and $G_Z(x, \cdot)$ are 
absolutely continuous with respect to Lebesgue measure and 
write $g_Y(x,\cdot)$ and $g_Z(x, \cdot)$ respectively for
their Radon--Nikodym derivatives.

\subsection{Conditional intensities and non-explosion conditions}
\label{sec:hazard_rates}

The \textit{conditional intensity}, also known as the 
stochastic intensity, of a temporal point process describes 
the infinitesimal conditional probability of occurrence given 
the history of the process \cite{Karr91}. More precisely,
for $n=0, 1, \dots$ and $0 = x_0 \leq x_1 \leq \dots \leq x_n
\leq x$,
\begin{equation}
 \lambda_{n+1}(x; x_1, \dots, x_n) \, dx = 
 \mathbb{P}(X_{n+1} \leq x+dx\,|\,
 X_{n+1} \geq x, X_0 = 0, X_1 = x_1, \dots, X_n = x_n).
    \label{eq:cond_intensity_x}
\end{equation}
For a non-homogeneous semi-Markov process, the 
$\lambda_{n+1}(\cdot; \cdot)$ are closely related to the 
hazard rates of the sojourn times. To see this, recall 
that $S_0 = 1$ and assume that $n+1$ is odd. Then the 
conditional intensity of the jump process at time 
$x$ given jumps at times 
$0 \leq x_1 \leq x_2 \leq \cdots \leq x_n$ can be 
simplified as 
\begin{align}
    \lambda_{n+1}(x; x_1, \dots, x_n) dx &= 
    \mathbb{P}(X_{n+1} \leq x+ dx\,|\, X_{n+1} \geq x, 
    X_n = x_n, S_n = 1) \nonumber \\
    &=\frac{\mathbb{P}(x - x_n \leq T_{n+1} 
    \leq x - x_n + dx\,|\,X_n = x_n, S_n = 1)}{
    \mathbb{P}(T_{n+1} \geq x-x_n\,|\, X_n = x_n, S_n = 1)}
    \nonumber \\
    &=\frac{g_Y(x_n, x - x_n) \, dx }{1-G_Y(x_n, x - x_n)}
    \label{eq:hazard_rate_semimark_derivation}
\end{align}
whenever well-defined and using absolute continuity of
the semi-Markov kernel. When $G_Y(x_n, x-x_n) = 1$,
the conditional intensity is set to zero. For even $n+1$,
a similar argument holds with $g_Z$ and $G_Z$ instead
of $g_Y$ and $G_Y$.

The conditional intensity is a convenient tool to guarantee the 
existence of the process. Indeed, \cite{Haezendonck80}
developed suitable comparison criteria under which
explosion, the situation in which there are 
infinitely many transitions in a finite time span, 
can be prevented. Indeed, their Corollaries~2 and 5 imply, 
for two temporal point processes $X_n$ and $X_n^*$ with 
conditional intensities $\lambda$ and $\lambda^*$, that if
\begin{itemize}
    \item for every $n \in \mathbb{N}_0$, 
       $\lambda_{n+1} \leq \lambda^*_{n+1}$;
    \item for every $n\in\mathbb{N}$, either 
    $\lambda_{n+1}(x; x_1, \dots, x_n)$ or 
    $\lambda^*_{n+1}(x; x_1, \dots, x_n)$
    depends only on $x-x_n$,
\end{itemize}
then the probability of explosion at or before time $x$ of 
the point process defined by $\lambda$ is at most as big 
as that of the point process defined by $\lambda^*$. Under
the same conditions \cite[Corollary~1]{Haezendonck80}, for
all $n\in\N$ and $x\geq 0$,
\[
\mathbb{P}(X_n \leq x ) \leq 
\mathbb{P}(X_{n}^* \leq x).
\]

Below, we
establish existence for two common families of sojourn 
time distributions, the Gamma and the Weibull.

\begin{proposition}
Let $(S_n, X_n)_{n=1}^{\infty}$ be an alternating non-homogeneous
semi-Markov process with values in $\{ 0, 1\} \times 
\mathbb{R}^+$ with  $S_0=1$, $X_0=0$ and semi-Markov
kernels $G_Y(x,\cdot)$, $G_Z(x, \cdot)$ that follow 
Gamma distributions with shape and rate parameters 
$\theta_Y(x) = (k_Y(x), \lambda_Y(x))$ and 
$\theta_Z(x) = (k_Z(x), \lambda_Z(x))$ in 
$[1,\infty) \times (0,\infty)$ such that, 
for all $x\in\mathbb{R}^+$,
\[
\lambda_Y(x) \leq c;
\quad
\lambda_Z(x) \leq c
\]
for some $c > 0$. 
Write $X_\infty = \lim_{n\to \infty} X_n$ for the
time of explosion. Then $\mathbb{P}(X_\infty < \infty)
= 0$.
\label{prop:semimark_gamma}
\end{proposition}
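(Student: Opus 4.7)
The strategy is to apply the comparison criterion of Haezendonck recalled above, using as dominating process the homogeneous Poisson process of rate $c$. This reference process has constant intensity $\lambda^*_{n+1} \equiv c$, which in particular depends only on $x - x_n$, and is obviously non-explosive (for instance, its $n$-th jump time satisfies $X_n^*/n \to 1/c$ a.s., so $X_\infty^* = \infty$ a.s.). Hence, if we can establish the pointwise bound $\lambda_{n+1}(x; x_1, \dots, x_n) \leq c$ for every $n \in \mathbb{N}_0$ and every admissible history, the criterion will yield $\mathbb{P}(X_n \leq x) \leq \mathbb{P}(X_n^* \leq x)$ for every $n$ and every $x \geq 0$, so that letting $n \to \infty$ and then $x \to \infty$ gives $\mathbb{P}(X_\infty < \infty) \leq \mathbb{P}(X_\infty^* < \infty) = 0$.

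By (\ref{eq:hazard_rate_semimark_derivation}), for odd $n+1$ the conditional intensity equals the hazard rate at $t = x - x_n$ of the $\mathrm{Gamma}(k_Y(x_n), \lambda_Y(x_n))$ sojourn distribution, and symmetrically for even $n+1$ in terms of the $Z$-parameters. The key ingredient is therefore the bound $h(t) \leq \lambda$ for the hazard rate $h$ of a $\mathrm{Gamma}(k, \lambda)$ distribution with shape $k \geq 1$. A direct calculation using the substitution $s = t + u$ gives, for $t > 0$,
\begin{equation*}
\frac{1}{h(t)} = \frac{\int_t^\infty s^{k-1} e^{-\lambda s}\, ds}{t^{k-1} e^{-\lambda t}} = \int_0^\infty \left(1 + \frac{u}{t}\right)^{k-1} e^{-\lambda u}\, du \geq \int_0^\infty e^{-\lambda u}\, du = \frac{1}{\lambda},
\end{equation*}
where the inequality uses $(1 + u/t)^{k-1} \geq 1$ for $k \geq 1$ and $u, t \geq 0$; at $t = 0$ the hazard rate equals $0$ if $k > 1$ or $\lambda$ if $k = 1$, so the bound is trivial there. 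Combined with the standing hypothesis $\lambda_Y(x), \lambda_Z(x) \leq c$, this delivers $\lambda_{n+1}(x; x_1, \dots, x_n) \leq c$ uniformly in $n$, $x$ and the past jump times, which is exactly what the comparison criterion requires.

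The only non-routine step is the Gamma hazard-rate bound; the rest is bookkeeping, and the argument separates cleanly into (i) identifying the conditional intensity with the sojourn hazard via the absolute continuity of $G_Y, G_Z$, (ii) dominating that hazard by the rate parameter of the Gamma law, and (iii) invoking \cite{Haezendonck80}. It is worth remarking that the assumption $k_Y(x), k_Z(x) \geq 1$ is used only to guarantee monotonicity of the Gamma hazard (the IFR property); allowing $k < 1$ would produce an unbounded hazard near the origin and the same comparison scheme would no longer apply directly.
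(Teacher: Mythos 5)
Your proof is correct, and its overall architecture is the same as the paper's: identify the conditional intensity with the Gamma sojourn hazard, bound it uniformly by $c$, and invoke the comparison criterion of \cite{Haezendonck80} against a rate-$c$ Poisson process (whose intensity trivially depends only on $x-x_n$). Where you genuinely diverge is in how the key bound $h(t)\leq\lambda$ for the ${\rm Gamma}(k,\lambda)$ hazard with $k\geq 1$ is obtained. The paper first computes $\lim_{x\to\infty}\lambda_{n+1}(x;x_1,\dots,x_n)=\lambda_T(x_n)$ via L'H\^opital and then proves monotonicity of the hazard by showing $\frac{\partial}{\partial t}\log h(t)\geq 0$ through an integration-by-parts manipulation, concluding that the increasing hazard is dominated by its limit. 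You instead bound the reciprocal hazard directly,
\[
\frac{1}{h(t)}=\int_0^\infty \left(1+\frac{u}{t}\right)^{k-1} e^{-\lambda u}\,du \;\geq\; \frac{1}{\lambda},
\]
which is shorter, avoids the limit computation and the IFR (monotone hazard) argument entirely, and makes transparent exactly where $k\geq 1$ enters. The trade-off is that the paper's route also yields the qualitative facts (hazard increasing to $\lambda_T(x_n)$) that it reuses implicitly in later discussion, whereas your argument delivers only the bound --- which is all the proposition needs. One further minor difference: you conclude non-explosion by passing to the limit in the Corollary~1 bound $\mathbb{P}(X_n\leq x)\leq \mathbb{P}(X_n^*\leq x)$, while the paper cites the explosion-comparison statements (Corollaries~2 and 5) directly; both are legitimate ways to finish.
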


\begin{proof}
The probability density and cumulative distribution 
functions of the Gamma distribution with shape and
rate parameters $k(x)$ and $\lambda(x)$ are, for 
$\tau \geq 0$,
\begin{align*}
 g(x, \tau; k(x), \lambda(x) ) = 
 \frac{ {\lambda(x)}^{k(x)} \tau^{k(x)-1} e^{-\lambda(x) \tau}}{\Gamma(k(x))};
    \qquad 
 G(x, \tau; k(x),\lambda(x)) = 
    \frac{\gamma(k(x), \lambda(x) \tau)}{\Gamma(k(x))},
\end{align*}
writing $\Gamma$ for the gamma function and $\gamma$ for the lower
incomplete gamma function. The conditional intensity is, for $n=0,
1, \dots$ and $0 = x_0 \leq x_1 \leq \dots \leq x_n \leq x$,
\begin{align*}
    \lambda_{n+1}(x; x_1, \dots, x_n) &= 
    \frac{g_T(x_n, x-x_n; k_T({x_n}), \lambda_T({x_n}))}
    {1 - G_T(x_n, x-x_n;k_T({x_n}), \lambda_T({x_n}))} \\
    &= \frac{\lambda_T({x_n})^{k_T({x_n})}
       (x - x_n)^{k_T({x_n})-1}
       e^{-\lambda_T({x_n}) (x - x_n)}}
       {\int_{\lambda_T({x_n}) (x-x_n)}
       ^{\infty}u^{k_T({x_n}) - 1} e^{-u}\,du},
\end{align*}
where $g_T$ is either $g_Y$ or $g_Z$. We examine the limiting 
behaviour as $x\to\infty$. See that
\[
\lim_{x \to \infty}
g_T(x_n, x-x_n;k_T(x_n), \lambda_T(x_n)) = 0, \qquad 
\lim_{x \to \infty} \left(
1-G_T(x_n, x-x_n;k_T(x_n)\lambda_T(x_n)) 
\right) = 0.
\]
Noting that both are differentiable on $(0,\infty)$, by
L'H\^opital's rule, 
\begin{align}
    \lim_{x \to \infty} \lambda_{n+1}(x; x_1, \dots, x_n) &= \lim_{x \to \infty}\frac{\lambda_T(x_n) (x - x_n) - (k_T(x_n) - 1)}
    {x - x_n} = \lambda_T(x_n)
    \label{eq:hazard_rate_gamma}
\end{align}
after simplifying. 

To prove monotonicity, we must show 
that $\lambda_{n+1}(x;x_1, \dots, x_n)$ is increasing in 
$x\geq x_n$. Write $t = \lambda_T(x_n) ( x - x_n)$. Then 
$\lambda_{n+1}(x; x_1, \dots, x_n)$ can be written as
$\lambda_T(x_n) h(t)$ for 
\[
h(t) = \frac{t^{k_T(x_n)-1} e^{-t}}{ 
  \int_t^\infty u^{k_T(x_n) - 1} e^{-u} du}.
\]
Therefore, it suffices to show that the function $t \to \log h(t)$ is 
non-decreasing in $t > 0$. Now,
\begin{align*}
    \frac{\partial}{\partial t} \log h(t) &= 
    \frac{k_T(x_n) -1}{t} - 1 + \frac{t^{k_T(x_n) -1} e^{-t}}{\int_{t} ^{\infty} u^{k_T({x_n}) - 1} e^{-u}\,du}.
\end{align*}
If $t < k_T(x_n) - 1$, we see directly that
the derivative is positive. Otherwise, use 
integration by parts to simplify the last term in the 
right-hand side to
\begin{align*}
    1 -  \int_{t}^{\infty} \frac{k_T(x_n)-1}{u}
     u^{k_T({x_n}) - 1} e^{-u}\,du
   \Big/ \int_{t} ^{\infty} u^{k_T({x_n}) - 1} e^{-u}\,du.
\end{align*}
Consequently
\begin{align*}
    \frac{\partial}{\partial t} \log h(t) &= 
   \int_{t}^{\infty} \left\{
   \frac{k_T(x_n)-1}{t} - \frac{k_T(x_n)-1}{u}
   \right\}
     u^{k_T({x_n}) - 1} e^{-u}\,du
   \Big/ \int_{t} ^{\infty} u^{k_T({x_n}) - 1} e^{-u}\,du 
\end{align*}
is non-negative. We conclude that 
$ \lambda_{n+1}(x; x_1, \dots, x_n) $ 
is bounded by $\lambda_T(x_n)$ for all 
$k_T(x_n) \geq 1$.

Recall that we assume that 
$\sup_{x\in\mathbb{R}^+} \max( \lambda_Y(x),
\lambda_Z(x) ) \leq c$. We may then construct a Poisson
process $N^*$ with conditional intensity
$\lambda^*_{n+1}(x;x_1, \dots, x_n) = c$. 
Clearly, $\lambda^*$ satisfies the second condition of 
\cite[Corollary~2]{Haezendonck80}. Moreover,
$\lambda_{n+1} \leq c = \lambda^*_{n+1}$ for every 
$n \in \mathbb{N}_0$. Since a Poisson process with constant 
intensity has probability zero to explode, we conclude
that $\mathbb{P}(X_\infty < \infty) = 0$.
\end{proof}

Important special cases include $k_T(x) = 1$ for exponential 
distributions or, more generally, $k_T(x) \in \mathbb{N}$ 
corresponding to Erlang distributed phases.

\begin{proposition}
Let $(S_n, X_n)_{n=1}^{\infty}$ be an alternating non-homogeneous
semi-Markov process with values in $\{ 0, 1\} \times 
\mathbb{R}^+$ with  $S_0=1$, $X_0=0$ and semi-Markov
kernels $G_Y(x,\cdot)$, $G_Z(x, \cdot)$ that follow 
Weibull distributions with shape and rate parameters 
$\theta_Y(x) = (k_Y(x), \lambda_Y(x))$ and 
$\theta_Z(x) = (k_Z(x), \lambda_Z(x))$ in 
$(0,\infty) \times (0,\infty)$ such that 
(i) $\lambda_Y(x) \leq c$,
$\lambda_Z(x) \leq c$ for some $c > 0$ and
(ii) either $1 \leq k_Y(x) \leq k$,
$1 \leq k_Z(x) \leq k$ for some $k \geq 1$, or
$k_Y(x) = k_Z(x) = k$ for some $k> 0$.
Write $X_\infty = \lim_{n\to \infty} X_n$ for the
time of explosion. Then $\mathbb{P}(X_\infty < \infty)
= 0$.
\label{prop:semimark_weibull}
\end{proposition}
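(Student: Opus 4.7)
The plan is to follow the comparison strategy used in the proof of Proposition~\ref{prop:semimark_gamma}, invoking \cite[Corollary~2]{Haezendonck80} to dominate the jump process by a non-exploding one. The crucial difference is that a constant dominating intensity will no longer suffice: the Weibull hazard is either unbounded near $\tau=0$ (when the shape is below $1$) or grows polynomially as $\tau\to\infty$ (when the shape exceeds $1$). I would therefore seek a dominator $\lambda^*(\tau)$ depending only on the elapsed time $\tau=x-x_n$, which is precisely what the second bullet of the comparison lemma allows.

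First I would substitute the Weibull density $g_T(x_n,\tau)=k_T(x_n)\lambda_T(x_n)^{k_T(x_n)}\tau^{k_T(x_n)-1}\exp(-(\lambda_T(x_n)\tau)^{k_T(x_n)})$ and survival function $1-G_T(x_n,\tau)=\exp(-(\lambda_T(x_n)\tau)^{k_T(x_n)})$ into~(\ref{eq:hazard_rate_semimark_derivation}); the ratio collapses to the closed form
\[
\lambda_{n+1}(x;x_1,\dots,x_n) = k_T(x_n)\,\lambda_T(x_n)^{k_T(x_n)}\,(x-x_n)^{k_T(x_n)-1}.
\]
In the first alternative of condition~(ii), with $1\leq k_T\leq k$ and $\lambda_T\leq c$, the prefactor $k_T(x_n)\lambda_T(x_n)^{k_T(x_n)}$ is uniformly bounded by some constant $C=C(c,k)$, and the elementary inequality $\tau^{k_T-1}\leq 1+\tau^{k-1}$ (verified by splitting $\tau\leq 1$ versus $\tau\geq 1$) gives the dominator $\lambda^*(\tau)=C(1+\tau^{k-1})$. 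In the second alternative, with $k_Y=k_Z=k$ fixed, the choice $\lambda^*(\tau)=k c^k \tau^{k-1}$ suffices, corresponding to the hazard of a Weibull$(k,c)$ distribution. Both dominators depend only on $\tau$, so the hypotheses of the comparison lemma are met irrespective of the parity of $n+1$.

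To close, I would verify non-explosion of the renewal process $N^*$ driven by $\lambda^*$. Its interarrival survival function $\exp\{-\int_0^\tau \lambda^*(s)\,ds\}$ defines a proper distribution supported on $(0,\infty)$ with no atom at $0$, so the i.i.d.\ sojourn times are strictly positive almost surely; hence $X_n^*\to\infty$ a.s.\ and $\mathbb{P}(X_\infty^*<\infty)=0$. Applying \cite[Corollary~2]{Haezendonck80} yields $\mathbb{P}(X_\infty<\infty)\leq\mathbb{P}(X_\infty^*<\infty)=0$. The main obstacle, and the reason condition~(ii) takes its particular form, is that when $k_Y\neq k_Z$ and either is below $1$, the two phase hazards blow up at incompatible rates near $\tau=0$ and no single $\tau$-only dominator can simultaneously bound both phases. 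Restricting either to $k_Y,k_Z\in[1,k]$ or to a common shape $k$ is what permits a uniform comparison; this compatibility check is the only genuine extra ingredient beyond the Gamma proof.
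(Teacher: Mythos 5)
Your proposal is correct and follows the same overall strategy as the paper: write the Weibull hazard in closed form, dominate it by a conditional intensity that depends only on the elapsed time $\tau=x-x_n$, and invoke the comparison criterion of \cite{Haezendonck80} together with non-explosion of the dominating renewal process. The one substantive difference is your treatment of the first alternative of condition (ii). The paper uses a single dominator for both cases, namely the hazard $k c^k \tau^{k-1}$ of a Weibull$(k,c)$ renewal process, and asserts $\lambda_{n+1}\leq kc^k(x-x_n)^{k-1}$ outright; but when $k_T(x_n)<k$ (e.g.\ $k_T=1$, $k=2$, $\lambda_T=c$) this inequality fails for small $\tau$, since the left side stays bounded away from zero while the right side vanishes as $\tau\to 0$. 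Your splitting into the two alternatives and the dominator $\lambda^*(\tau)=C(1+\tau^{k-1})$, justified via $\tau^{k_T-1}\leq 1+\tau^{k-1}$, repairs exactly this point, and your check that the corresponding interarrival distribution is proper with strictly positive sojourn times (so the dominating renewal process does not explode) is a perfectly adequate substitute for the paper's strong-law argument. One caveat on your closing remark: it is not true that no $\tau$-only dominator exists when $k_Y\neq k_Z$ with values below one; as long as the shapes are bounded away from zero, a dominator of the form $C(1+\tau^{k_0-1})$ with $k_0=\inf k_T>0$ would still work, so condition (ii) is a convenient sufficient hypothesis rather than the obstruction you describe. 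This does not affect the validity of your proof of the stated proposition.
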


\begin{proof}
Let $G_T(x, \cdot)$ and corresponding $(\lambda_T(x), k_T(x))$ correspond to either $Y$- or $Z$-phase cases. The probability density and cumulative distribution
functions of the Weibull distribution with shape
and rate parameters $k(x)$ and $\lambda(x)$ are,
for $\tau \geq 0$,
\begin{eqnarray*}
    g(x, \tau; k(x),\lambda({x})) & = &
    k(x) \lambda(x) 
    \left( \lambda({x}) \tau \right)^{k(x)-1} 
    e^{-(\lambda({x}) \tau)^{k(x)}}; \\
    G(x, \tau; k(x)\,\lambda({x})) & = &
    1 - e^{-( \lambda({x}) \tau)^{k(x)}}.
\end{eqnarray*}
The conditional intensity is therefore, for $n=0,
1, \dots$ and $0 = x_0 \leq x_1 \leq \dots \leq x_n \leq x$,
\[
    \lambda_{n+1}(x;x_1, \dots, x_n) = k_T(x_n) \lambda_T(x_n) \left( \lambda_T(x_n) (x-x_n) \right)^{k_T(x_n)-1}.
\]
Since the conditional intensity is unbounded, we cannot use
a Poisson process to bound $\lambda_{n+1}$. Instead
we turn to a homogeneous renewal process $N^*$ with 
sojourn times that are Weibull distributed with
shape parameter $k$ and rate parameter $c$. By the
strong law of large numbers, since the expected 
sojourn times are strictly positive, $N^*$ has 
explosion probability zero 
\cite[Section~3.1]{Ross96}. Also, 
\[
\lambda_{n+1}(x; x_1, \dots, x_n) \leq 
\lambda^*_{n+1}(x; x_1, \dots, x_n) = 
k c^k (x-x_n)^{k-1}
\]
and both conditional intensities are a function of
$x-x_n$ only. By \cite[Corollary~2]{Haezendonck80},
$\mathbb{P}(X_\infty < \infty) = 0$.
\end{proof}

The case that $k = 1$ corresponds to exponential
sojourn times.

\subsection{Renewal function: existence and boundedness}

The process counting the number of cycles 
having occurred by time $t \geq 0$ can be written as
\begin{equation}
    N(t) = \sup\left\{n \in \mathbb{N}_0: 
    X_{2n} \leq t\right\},
    \label{eq:countproc_semimarkov}
\end{equation}
where a cycle is an interval of time within which each 
state occurs once. The distribution of $X_{2n}$, the 
jump time after completing the $n$-th cycle, is, for
$n\in\mathbb{N}_0$ and $t\geq 0$,
\begin{equation*}
    F_{2n}(t) = \mathbb{P}\left(
       \sum_{i=1}^{2n} T_i \leq t \right) = 
       \mathbb{P}(X_{2n} \leq t) =
       \mathbb{P}(N(t) \geq n).
\end{equation*}
The \textit{renewal function} is defined, analogously to that of the 
classic alternating renewal process, as  
$M(t) = \mathbb{E}N(t)$, $t \geq 0$ \cite{Janssen06}. 
In our case,
\begin{align*}
    M(t) &= \mathbb{E}N(t) = 
    \sum_{n=0}^{\infty} \mathbb{P}(N(t) > n) = 
    \sum_{n=1}^{\infty} \mathbb{P}(X_{2n} \leq t) = 
    \sum_{n=1}^{\infty} F_{2n}(t), 
\end{align*}
a $2n$-fold convolution.

The following corollaries to 
Propositions~\ref{prop:semimark_gamma}--\ref{prop:semimark_weibull} 
hold.

\begin{corollary}
Let $(S_n, X_n)_{n=1}^{\infty}$ be as in 
Proposition~\ref{prop:semimark_gamma}.
Then its renewal function $M(t)$ satisfies 
$M(t) \leq ct$, $t\geq 0$.
\label{prop:semimark_renewal_pois}
\end{corollary}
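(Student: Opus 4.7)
The plan is to leverage the Poisson comparison constructed in the proof of Proposition~\ref{prop:semimark_gamma} together with \cite[Corollary~1]{Haezendonck80}, which provides stochastic domination of the jump times themselves, not merely non-explosion.

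First I would recall that in the Gamma setting we already exhibited a homogeneous Poisson process $N^*$ with constant conditional intensity $c$ satisfying $\lambda_{n+1}(x;x_1,\dots,x_n) \leq c = \lambda_{n+1}^*(x;x_1,\dots,x_n)$, and that both conditions of \cite[Corollary~2]{Haezendonck80} hold (the Poisson intensity trivially depends only on $x-x_n$, in fact on neither). Therefore \cite[Corollary~1]{Haezendonck80} applies and yields, for every $n \in \N$ and $t \geq 0$,
\[
\mathbb{P}(X_n \leq t) \leq \mathbb{P}(X_n^* \leq t),
\]
where $X_n^*$ denotes the $n$-th arrival time of $N^*$.

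Next I would rewrite the renewal function, using the representation already recorded just before the corollary,
\[
M(t) = \sum_{n=1}^{\infty} \mathbb{P}(X_{2n} \leq t) \leq \sum_{n=1}^{\infty} \mathbb{P}(X_{2n}^* \leq t) = \sum_{n=1}^{\infty} \mathbb{P}(N^*(t) \geq 2n),
\]
where on the right $N^*(t)$ denotes the counting variable of the Poisson process, which is Poisson distributed with mean $ct$.

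Finally I would evaluate the right-hand side. Since summing $\mathbb{P}(N^*(t) \geq 2n)$ over $n \geq 1$ counts, in expectation, the number of positive integers $n$ with $2n \leq N^*(t)$, we obtain
\[
\sum_{n=1}^{\infty} \mathbb{P}(N^*(t) \geq 2n) = \mathbb{E}\!\left\lfloor \tfrac{N^*(t)}{2} \right\rfloor \leq \tfrac{1}{2}\,\mathbb{E}[N^*(t)] = \tfrac{ct}{2} \leq ct,
\]
which gives the claim. The proof is essentially a two-line reduction; the only point that needs care is remembering that $N(t)$ counts completed cycles (pairs of jumps), so that the stochastic domination of jump times must be applied at even indices, which is where the Poisson comparison of Proposition~\ref{prop:semimark_gamma} is genuinely used rather than merely its explosion conclusion.
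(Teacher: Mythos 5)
Your proof is correct and follows essentially the same route as the paper: the same Poisson comparison process $N^*$ with intensity $c$ constructed in Proposition~\ref{prop:semimark_gamma}, the same appeal to \cite[Corollary~1]{Haezendonck80} applied at the even indices $2n$, and the same series representation $M(t)=\sum_{n\geq 1}\mathbb{P}(X_{2n}\leq t)$. If anything, your final step is more careful than the paper's, which loosely equates $\sum_{n\geq 1}\mathbb{P}(X^*_{2n}\leq t)$ with $\mathbb{E}N^*(t)=ct$, whereas your evaluation $\sum_{n\geq 1}\mathbb{P}(N^*(t)\geq 2n)=\mathbb{E}\lfloor N^*(t)/2\rfloor\leq ct/2$ even yields the sharper bound $ct/2$.
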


\begin{proof}
Construct a Poisson process $N^*(t)$ with intensity $c$
as in the proof of Proposition~\ref{prop:semimark_gamma}
and write $X^*_n$ for its jump times.
By \cite[Corollary~1]{Haezendonck80}, 
$\mathbb{P}(X_{2n} \leq t)$ is bounded from above
by $\mathbb{P}(X^*_{2n} \leq t)$. 
Therefore,
\[
    \mathbb{E}N(t) = 
    \sum_{n=1}^{\infty} \mathbb{P}(X_{2n} \leq t) 
    \leq 
    \sum_{n=1}^{\infty} \mathbb{P}(X^*_{2n} \leq t) 
    = \mathbb{E}N^*(t) = ct.
\]
\end{proof}

\begin{corollary}
Let $(S_n, X_n)_{n=1}^{\infty}$ be as in 
Proposition~\ref{prop:semimark_weibull}. Then
its renewal function $M(t)$ is finite and bounded from 
above by the expectation $\EE (N^*(t))$
of a renewal process $N^*(t)$ with Weibull distributed 
sojourn times having shape parameter $k$ and rate 
parameter $c$. 
\end{corollary}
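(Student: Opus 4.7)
The plan is to mirror the proof of Corollary~\ref{prop:semimark_renewal_pois}, replacing the Poisson dominator with the homogeneous Weibull renewal process $N^*$ that was already constructed in the proof of Proposition~\ref{prop:semimark_weibull}. Write $X_n^*$ for its jump times. Since that proposition established $\lambda_{n+1} \leq \lambda_{n+1}^*$ pointwise and both conditional intensities depend on $x-x_n$ only, Corollary~1 of \cite{Haezendonck80} applies and gives $\mathbb{P}(X_{2n} \leq t) \leq \mathbb{P}(X^*_{2n} \leq t)$ for every $n\in\N$ and every $t\geq 0$.

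Next, I would sum this inequality over $n$: using the representation of the renewal function developed just before the statement,
\[
M(t) \;=\; \sum_{n=1}^{\infty} \mathbb{P}(X_{2n} \leq t) \;\leq\; \sum_{n=1}^{\infty} \mathbb{P}(X_{2n}^* \leq t) \;=\; \EE N^*(t),
\]
which is exactly the claimed comparison bound.

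All that remains is to argue that $\EE N^*(t)$ is finite. Here $N^*$ is an ordinary renewal process whose i.i.d.\ sojourn times are Weibull with shape $k>0$ and rate $c>0$, so in particular they have a strictly positive, finite mean $\mu$. By standard renewal theory (see, e.g., \cite{Ross96}), the renewal function of any renewal process with integrable, strictly positive sojourn times is finite for every $t<\infty$; indeed a crude geometric tail bound on $\PP(X_{2n}^* \leq t)$ via Chernoff applied to the Weibull distribution suffices. I expect this last finiteness step to be the only place where care is needed, though it is entirely classical; the stochastic-domination part is essentially a copy–paste from the Poisson case.
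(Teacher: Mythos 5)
Your proposal is correct and follows essentially the same route as the paper: construct the homogeneous Weibull renewal process $N^*$ from the proof of Proposition~\ref{prop:semimark_weibull}, apply Corollary~1 of \cite{Haezendonck80} and sum over $n$ exactly as in Corollary~\ref{prop:semimark_renewal_pois}, and invoke standard renewal theory (\cite{Ross96}) for the finiteness of $\EE N^*(t)$.
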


\begin{proof}
Construct the renewal process $N^*$ as in the proof 
of Proposition~\ref{prop:semimark_weibull}. 
Then, as in the proof of 
Corollary~\ref{prop:semimark_renewal_pois},
$\mathbb{E}(N(t)) \leq  \mathbb{E}(N^*(t))$. 
Also $\mathbb{E}(N^*(t)) < \infty$
(see \cite{Asmussen03} or \cite[Prop. 3.2.2.]{Ross96}).
\end{proof}

\subsection{Age and excess distributions}

Now that the theoretical groundwork for the censoring 
mechanism has been laid, we proceed by determining the 
joint distribution of age and excess. The age $A(t)$ is 
the time elapsed since the last phase change, and 
$B(t)$, the excess, is the time remaining until the 
next phase change. For all $t$ where the process is in 
state $0$, or the $Z$-phase, we assume that the occurrence 
time can be observed perfectly. Therefore we only consider 
age and excess with respect to state $1$, or the $Y$-phase. 
Obtaining their joint distribution allows us to specify the 
likelihood of intervals based on their starting point 
and length in terms of the semi-Markov kernel $G_Y$. 
See Figure~\ref{fig:semimark_visual} for a 
visualisation of the age and excess functions.

\begin{figure}[H]
    \centering
    \begin{tikzpicture}
        \draw[<->, thick] (-0.5,0) -- (13.5,0);
        \draw[->, thick] (0,-0.15) -- (0,3) node at (0,-0.4) {\footnotesize $0$};
        \node at (-1.25,1) {\footnotesize State $1$ (Out)};
        \node at (-1.25,2) {\footnotesize State $0$ (Home)};
        
        \filldraw [black] (0,1) circle (2pt) node at (0.3,1.25) {\scriptsize $X_0$};; 
        \draw[-, very thick] (0,1) -- (2.25, 1);
        \node at (1.125, 0.7) {\scriptsize $T_1$};
        \filldraw [thick] (2.25,-0.15) -- (2.25, 0.15) node at (2.25,-0.4) {\footnotesize $3$};
        \filldraw [black, fill=none] (2.25,1) circle (1.5pt) ;
        
        \filldraw [black] (2.25,2) circle (2pt) node at (2.55,2.25) {\scriptsize $X_1$}; 
        \draw[-, very thick] (2.25,2) -- (3.75, 2);
        \node at (3, 1.7) {\scriptsize $T_2$};
        \filldraw [thick] (3.75,-0.15) -- (3.75, 0.15) node at (3.75,-0.4) {\footnotesize $5$};
        \filldraw [black, fill=none] (3.75,2) circle (1.5pt);
        
        \draw[dotted, very thick] (3.75,0) -- (3.75, 3);
        
        \filldraw [black, fill] (3.75,1) circle (2pt) node at (4.05,1.25) {\scriptsize $X_2$}; 
        \draw[-, very thick] (3.75,1) -- (6.75, 1);
        \node at (5.25, 0.7) {\scriptsize $T_3$};
        \filldraw [thick] (6.75,-0.15) -- (6.75, 0.15) node at (6.75,-0.4) {\footnotesize $9$};
        \filldraw [black, fill=none] (6.75,1) circle (1.5pt);
        
        \draw[thick] (6, 0.9) -- (6, 1.1) node at (6, 1.25) {\scriptsize $t$};
        \draw[->, very thick] (3.75,1) -- (6, 1);
        \node at (4.875, 1.25) {\scriptsize $A(t)$};
        \draw[<-, very thick] (6,1) -- (6.75, 1);
        \node at (6.375, 0.75) {\scriptsize $B(t)$};
        
        \filldraw [black] (6.75,2) circle (2pt)  node at (7.05,2.25) {\scriptsize $X_3$}; 
        \draw[-, very thick] (6.75,2) -- (7.75, 2);
        \node at (7.25, 1.7) {\scriptsize $T_4$};
        \filldraw [thick] (7.75,-0.15) -- (7.75, 0.15) node at (7.75,-0.4) {\footnotesize $11$};
        \filldraw [black, fill=none] (7.75,2) circle (1.5pt);
        
        \draw[dotted, very thick] (7.75,0) -- (7.75, 3);
        
        \filldraw [black] (7.75,1) circle (2pt) node at (8.05,1.25) {\scriptsize $X_4$}; 
        \draw[-, very thick] (7.75,1) -- (9, 1);
        \node at (8.375, 0.7) {\scriptsize $T_5$};
        \filldraw [thick] (9,-0.15) -- (9, 0.15) node at (9,-0.4) {\footnotesize $12$};
        \filldraw [black, fill=none] (9,1) circle (1.5pt);
        
        \filldraw [black] (9,2) circle (2pt) node at (9.35,2.25) {\scriptsize $X_5$}; 
        \draw[-, very thick] (9,2) -- (12, 2);
        \node at (10.5, 1.7) {\scriptsize $T_6$};
        \filldraw [thick] (12,-0.15) -- (12, 0.15) node at (12,-0.4) {\footnotesize $16$};
        \filldraw [black, fill=none] (12,2) circle (1.5pt);
        
        \draw[dotted, very thick] (12,0) -- (12, 3);
        
    \end{tikzpicture}
    \caption{A visualisation of a semi-Markov process with initial values $S_0 = 1$ and $X_0 = 0$. At the dotted line, one cycle has passed - i.e.\ the process has taken both possible state values. The jump times correspond to a change of state. For a given time $t$ in which the process is in state 
    $1$, a non-zero age $A(t)$ and excess $B(t)$ are recorded.}
    \label{fig:semimark_visual}
\end{figure}
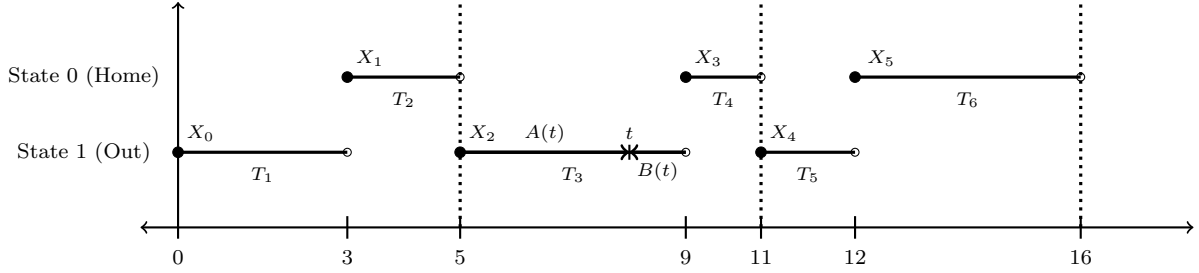

\begin{proposition}
Consider an alternating non-homogeneous semi-Markov 
process $(S_n, X_n)_{n=1}^{\infty}$ with values in 
$\{0,1\} \times \mathbb{R}^+$ with  $S_0=1$, $X_0=0$,
semi-Markov kernels $G_Y$ and $G_Z$ and associated counting
measure $N(t)$, $t\geq 0$. Let the age process with respect 
to the $Y$-phase be 
\[
  A(t) = (t - X_{2N(t)})\,
  \mathbf{1}\{X_{2N(t) + 1} > t\}
\]
and define the excess with respect to the $Y$-phase
as
\[ 
  B(t) = (X_{2N(t) + 1} - t)\,
  \mathbf{1}\{X_{2N(t) + 1} > t\},\]
where $X_{2N(t)}$ is the jump time immediately after
$N(t)$ cycles have been completed. 
Then, for $t\geq 0$ and $0\leq x \leq t$ and $z\geq 0$,
\begin{align}
    \mathbb{P}(A(t) \leq x, B(t) \leq z)
    &= G_Y(0,t) - \int_{t-x}^{t} [1 - G_Y(s, t + z - s)]\,dM(s) - \int_0^{t-x} [1 - G_Y(s, t-s)]\,dM(s) \nonumber \\
    &+ \mathbf{1}\{x=t\}[G_Y(0, t+z) - G_Y(0,t)].
    \label{eq:joint_age_excess_semimark}
\end{align}
\label{prop:age_excess_semimark}
\end{proposition}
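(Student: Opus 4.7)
The plan is to decompose the event $\{A(t)\leq x,\ B(t)\leq z\}$ according to which sojourn interval contains $t$, apply the semi-Markov property (\ref{eq:markov_renewal}) to each later $Y$-phase, and then sum over cycles via the renewal measure $dM$. Concretely, I would partition the sample space into three mutually exclusive events: (i) $t$ lies in the initial $Y$-phase $[X_0,X_1)=[0,X_1)$; (ii) $t$ lies in a subsequent $Y$-phase $[X_{2n},X_{2n+1})$ for some $n\geq 1$; and (iii) $t$ lies in a $Z$-phase $[X_{2n+1},X_{2n+2})$ for some $n\geq 0$.

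On event (i), $A(t)=t$ and $B(t)=X_1-t$, so the constraint $A(t)\leq x$ combined with the standing hypothesis $x\leq t$ forces $x=t$, while $B(t)\leq z$ reduces to $X_1\leq t+z$. This immediately produces the correction term $\mathbf{1}\{x=t\}[G_Y(0,t+z)-G_Y(0,t)]$. On event (iii) both $A(t)$ and $B(t)$ vanish by construction, so the indicator constraints are automatic and the contribution is the full probability $\PP(\text{$Z$-phase at time }t)$, to be evaluated at the end via the complement.

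For event (ii), the regenerative step, I would condition on $X_{2n}=s$ and invoke (\ref{eq:markov_renewal}): given $S_{2n}=1$ and $X_{2n}=s$, the sojourn $T_{2n+1}$ has law $G_Y(s,\cdot)$ independent of the past, so the conditional probability of $\{X_{2n+1}>t,\ t-s\leq x,\ X_{2n+1}-t\leq z\}$ is $[G_Y(s,t+z-s)-G_Y(s,t-s)]\mathbf{1}\{t-x\leq s\leq t\}$. Integrating against $dF_{2n}$ and summing over $n\geq 1$ turns $\sum_n F_{2n}$ into $M$, giving $\int_{t-x}^{t}[G_Y(s,t+z-s)-G_Y(s,t-s)]\,dM(s)$. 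The interchange of sum and integral is justified by monotone convergence, and the finiteness of $M(t)$ needed throughout is supplied by the results of Section~\ref{sec:hazard_rates}.

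To finish, I would apply the same regenerative decomposition with no constraint on $A(t)$ or $B(t)$, obtaining $\PP(\text{$Y$-phase at }t)=1-G_Y(0,t)+M(t)-\int_0^t G_Y(s,t-s)\,dM(s)$ and hence $\PP(\text{$Z$-phase at }t)=G_Y(0,t)-M(t)+\int_0^t G_Y(s,t-s)\,dM(s)$. Substituting, using $M(t)=\int_0^{t-x}dM(s)+\int_{t-x}^{t}dM(s)$ and a routine rearrangement reproduces (\ref{eq:joint_age_excess_semimark}). The step I expect to require most care is the regenerative decomposition itself: verifying that conditioning on $X_{2n}=s$ genuinely yields the kernel $G_Y(s,\cdot)$ (which uses (\ref{eq:markov_renewal}) together with the parity of $2n$ forcing $S_{2n}=1$), and cleanly separating the $n=0$ initial-phase contribution from the $n\geq 1$ sum so that the $\mathbf{1}\{x=t\}$ boundary term is neither double-counted nor lost. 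Once this is in place the remaining algebra is mechanical.
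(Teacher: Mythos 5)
Your argument is correct and rests on the same key ingredient as the paper's proof: conditioning on $X_{2n}=s$ (with the parity forcing $S_{2n}=1$) so that the residual sojourn has law $G_Y(s,\cdot)$, and summing over $n\ge 1$ so that $\sum_n F_{2n}$ becomes the renewal function $M$. The assembly differs, though: the paper computes the survival quantities $\PP(A(t)>x)$, $\PP(B(t)>z)$ and $\PP(A(t)>x,\,B(t)>z)$ by that renewal decomposition and then recovers the joint distribution function by inclusion--exclusion, treating $x=t$ as a separate boundary case at the end, whereas you partition the event $\{A(t)\le x,\,B(t)\le z\}$ directly according to whether $t$ falls in the initial $Y$-phase, a later $Y$-phase, or a $Z$-phase. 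Your route makes the three components of the mixed distribution --- the atom at $(0,0)$, the singular mass on the line $x=t$, and the absolutely continuous part carried by $dM$ --- visible in a single pass, so the term $\mathbf{1}\{x=t\}[G_Y(0,t+z)-G_Y(0,t)]$ appears immediately rather than through a separate computation; the paper's route has the advantage that the marginal age and excess distributions (used afterwards, e.g.\ for the atom probability $w_t$ in (\ref{eq:atom-prob})) are obtained as by-products. Your treatment of the $Z$-phase contribution via the complement of the $Y$-phase probability is in substance the paper's computation of $\PP(A(t)=0)$, and the remaining rearrangement you describe does reproduce (\ref{eq:joint_age_excess_semimark}) in both the $x<t$ and $x=t$ cases, so there is no gap.
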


\begin{proof}
By construction, $X_0 = 0$ and $S_0 = 1$. 
Now, for $0 \leq x < t$,
\begin{align*}
    \mathbb{P}(A(t) > x) &= \mathbb{P}(t - X_{2N(t)} > x, \,
    X_{2N(t) + 1} > t\,|\,S_0 = 1, X_0 = 0) \\
    &= \sum_{n=0}^{\infty} \mathbb{P}(t - X_{2n} > x,\,
    X_{2n + 1} > t,\,N(t) = n\,|\,S_0 = 1, X_0 = 0) \\
    &= 1 - \mathbb{P}(T_1 \leq t\,|\,S_0 = 1, X_0 = 0) + \sum_{n=1}^{\infty} \mathbb{P}(t - X_{2n} > x,
    \,X_{2n + 1} > t\,|\,S_0 = 1, X_0 = 0)
\end{align*}
after simplifying and removing redundant conditions. 
Note that by (\ref{eq:semi_kernel}) and as we know we are guaranteed to be in state $1$, 
$\mathbb{P}(T_1 \leq t\,|\,S_0 = 1, X_0 = 0) =  
G_Y(0, t)$. Continuing,
\begin{align*}
    \mathbb{P}(A(t) > x) &= 1 - G_Y(0,t) + \sum_{n=1}^{\infty} 
    \mathbb{P}(X_{2n} < t- x;\,X_{2n + 1} > t\,|\,
        S_0 = 1, X_0 = 0) \\
    &= 1 - G_Y(0,t) + \int_0^{t-x} 
    \left[1 - G_Y(s, t-s)\right]\,dM(s),
    \label{eq:age_semimark}
\end{align*}
using the law of total probability and Fubini's theorem. Considering the discrete components,
\begin{equation*}
    \mathbb{P}(A(t) = 0) = 1 - P(A(t) > 0) = G_Y(0,t) - \int_0^{t} \left[1 - G_Y(s, t-s)\right]\,dM(s)
\end{equation*}
and
\begin{align*} 
    \mathbb{P}(A(t) = t) &= \mathbb{P}(X_1 > t) =
    1 - G_Y(0, t).
\end{align*}

Next turn to the excess. For $z \geq 0$, 
\begin{align*}
    \mathbb{P}(B(t) > z) &= \mathbb{P}(X_{2N(t)+1} > t + z\,|\, S_0 = 1, X_0 = 0) \\
    &= 1 - G_Y(0, t+z) + \int_0^{t} \left[1 - G_Y(s, t + z - s)\right]\,dM(s)
\end{align*}
and \begin{equation*}
    \mathbb{P}(B(t) = 0) = 1 - \mathbb{P}(B(t) > 0) = G_Y(0,t) - \int_0^{t} \left[1 - G_Y(s, t-s)\right]\,dM(s).
\end{equation*}
Using similar arguments, we can find an expression 
for the joint probability 
$\mathbb{P}(A(t) > x, B(t) > z)$. For $z \in [0, \infty)$ 
and $x \in [0, t)$, 
\begin{align*}
    \mathbb{P}(A(t) > x, B(t) > z) &= 
    \mathbb{P}(X_{2N(t)+1} > t + z, t - X_{2N(t)} > x\,|\,
       S_0 = 1, X_0 = 0) \\
    &= 1 - G_Y(0, t+z) + \int_0^{t-x} 
      \left[1 - G_Y(s, t + z - s)\right]\,dM(s).
\end{align*}

We can now handle the event $\{ A(t) \leq x, B(t) \leq z \}$ 
for $0 \leq x < t$, $z \geq 0$ as follows:
\begin{align*}
    \mathbb{P}(A(t) \leq x, B(t) \leq z) &= 
    \mathbb{P}(A(t) > x, B(t) > z) + 
    1 - \mathbb{P}(B(t) > z) - \mathbb{P}(A(t) > x) 
    \\
    &= G_Y(0,t) - \int_{t-x}^{t} 
    [1 - G_Y(s, t + z - s)]\,dM(s) - \int_0^{t-x}
    [1 - G_Y(s, t-s)]\,dM(s).
\end{align*}
Finally, for $x=t$, 
\[
\mathbb{P}(A(t) \leq t, B(t) \leq z)
= 1 - \mathbb{P}(B(t) > z) = G_Y(0, t+z) 
- \int_0^{t} \left[1 - G_Y(s, t + z - s)\right]\,dM(s)
\]
and we obtain the proposed expression.
\end{proof}

From Proposition~\ref{prop:age_excess_semimark} we conclude
that the probability that time $t \geq 0$ falls in a 
$Z$-phase is given by 
\begin{align} \label{eq:atom-prob}
    w_t = \mathbb{P}(A(t) \leq 0, B(t) \leq 0) &= G_Y(0,t) - \int_0^{t} [1 - G_Y(s, t-s)]\,dM(s).
\end{align} 
This case constitutes the atomic part of (\ref{eq:joint_age_excess_semimark}). The singular
component on the line $x=t$ has total mass $1 - G_Y(0,t)$
and represents the case that $t$ falls before the first
jump of the semi-Markov process.

The absolutely continuous component of (\ref{eq:joint_age_excess_semimark}) can be written as 
\[
\int_0^x \int_0^z g_Y(t-u, u+v) m(t-u)  du dv
\]
provided that the Radon--Nikodym derivatives $m$ of $M$ 
and $g_Y$ of $G_Y$ exist. Recall that in our proposed 
censoring mechanism, when $t$ falls in a $Y$-phase, the 
entire interval $[t-A(t), t+B(t)]$ is reported, which may 
be parametrised by the left-most point $t-A(t)$ and length
$A(t) + B(t)$. Suppose that $A(t) = u$ and $B(t) = v$, and
apply the change of variables $a = t- u$ and $l = u + v$. 
We find that the joint probability density function of 
left-most point and length is
\begin{align}
    q_t(a, l) = \frac{m(a)g_Y(a, l)}{\int_0^t \left[1 - G_Y(s, t-s)\right]\,dM(s)}
    \mathbf{1} \{ 0 \leq a \leq t \leq a+l; l\geq 0 \},
  \label{prop:density_semimark}  
\end{align}
upon scaling. 

\begin{proposition} \label{prop:forward-sample}
Let $g_Y$ and $m$ be as before, and let $(A,L)$ be distributed according to $q_t(a,l)$ given
by (\ref{prop:density_semimark}). 
Then the marginal probability density function of $A$ at
$a \in [0, t]$ is 
\begin{align}
     f_t(a) & = \frac{m(a)[1-G_Y(a,t-a)]}{\int_{0}^t \left[1 - G_Y(s, t-s)\right]\, dM(s) }
     \label{eq:marginal_dist_sm_a}
\end{align}
and the conditional probability density function of $L$
given $A=a$ is, for $l \in [t-a, \infty)$, 
\begin{align}
    f_{t,\,L\,|A=a}(l) &= \frac{g_Y(a, l)}{1-G_Y(a,t-a)}.
    \label{eq:conditional_dist_sm_a}
\end{align}
\end{proposition}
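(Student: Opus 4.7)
The plan is to obtain both densities by a direct integration of the joint density $q_t(a,l)$ of (\ref{prop:density_semimark}), since the proposition is essentially a statement about marginalising and conditioning. I will first read off the support carefully: for fixed $a \in [0,t]$, the indicator in $q_t(a,l)$ forces $l \geq t-a \geq 0$, so the relevant range for $l$ is $[t-a, \infty)$; conversely for fixed $l \geq 0$, the $a$-range is $[\max(0, t-l), t]$, but this will not be needed explicitly.

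For the marginal of $A$, I would compute, for $a \in [0,t]$,
\[
f_t(a) \;=\; \int_0^{\infty} q_t(a,l)\,dl
\;=\; \frac{m(a)}{\int_0^t [1 - G_Y(s,t-s)]\,dM(s)} \int_{t-a}^{\infty} g_Y(a,l)\,dl.
\]
Since $g_Y(a,\cdot)$ is the Radon--Nikodym derivative of $G_Y(a,\cdot)$, the inner integral equals $1 - G_Y(a, t-a)$, which yields (\ref{eq:marginal_dist_sm_a}) immediately. As a sanity check I would verify that $\int_0^t f_t(a)\,da = 1$: because $m$ is by assumption the Radon--Nikodym derivative of $M$, the numerator integrated in $a$ over $[0,t]$ equals $\int_0^t [1 - G_Y(a, t-a)]\,dM(a)$, which cancels the denominator and confirms that $q_t$ is a bona fide joint density on its support.

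The conditional density then drops out by the definition $f_{t, L \mid A=a}(l) = q_t(a,l)/f_t(a)$. For $a \in [0,t]$ and $l \geq t - a$ the $m(a)$ factors and the normalising constant both cancel, leaving
\[
f_{t, L \mid A=a}(l) \;=\; \frac{g_Y(a,l)}{1 - G_Y(a,t-a)},
\]
which is exactly (\ref{eq:conditional_dist_sm_a}); for $l < t-a$ the joint density is zero so the conditional vanishes, consistent with the stated support $[t-a, \infty)$. I would note that this latter expression is the hazard-based density of the sojourn time $T_{N(t)+1}$ conditioned on its exceeding the already elapsed age $t-a$, which gives a reassuring probabilistic interpretation.

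No step is genuinely hard; the only point requiring a bit of care is bookkeeping of the support (ensuring $l \geq t-a$ rather than $l \geq 0$) and the use of the Radon--Nikodym derivatives $m$ and $g_Y$, whose existence has been assumed earlier. Consequently, the whole proof will be a short calculation rather than an argument with a genuine obstacle.
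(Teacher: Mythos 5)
Your proposal is correct and follows essentially the same route as the paper: integrate $q_t(a,l)$ over $l \in [t-a,\infty)$ to get the marginal of $A$, then divide $q_t(a,l)$ by $f_t(a)$ to obtain the conditional density of $L$ given $A=a$. Your added check that $f_t$ integrates to one and the explicit support bookkeeping are fine but not needed beyond what the paper does.
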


\begin{proof}
Assume that $0 \leq a \leq t \leq a+l$ and $l \geq 0$. The marginal distribution of the starting time $f_t(a)$ is
\begin{align*}
    f_t(a) &= \int q_t(a,l)\,dl =\frac{m(a)}{\int_{0}^t \left[1 - G_Y(s, t-s)\right]\,dM(s)}\int_{t-a}^{\infty} g_Y(a, l)\,dl \\
    &= \frac{m(a)[1-G_Y(a,t-a)]}{\int_{0}^t \left[1 - G_Y(s, t-s)\right]\,dM(s)},
\end{align*}
and
\begin{align*}
    f_{t,\,L\,|A=a}(l) = \frac{q_t(a,l)}{f_a(a)} = \frac{g_Y(a, l)}{1-G_Y(a,t-a)}.
\end{align*}
\end{proof}
The marginal and conditional distributions of the intervals can be used to generate interval samples. 

\section{A model for non-homogeneous interval-censoring}
\label{sec:complete_model}

\subsection{Model formulation}
\label{sec:mark_dist}

The ensemble of potentially censored occurrence times 
can be mathematically formalised
as a marked point process \cite{Daley03}. The ground process
of points represent the uncensored event occurrences, which
we model by a Markov point process \cite{Lies00}
defined by a probability density with respect to a unit rate
Poisson process. Temporal variations can be 
taken into account as well as interactions between the points. 
Each point is subsequently marked, independently of other points, 
either by an atom at the point when it is observed perfectly,
or by the interval in which the point lies in case of censoring.
The mark kernel that governs the random censoring is based on the 
distribution of age and excess in a non-homogeneous semi-Markov
process.

Formally, let $\mathcal{X}$ be an open set on the real line.
The state space $\mathcal{N}_{\mathcal{X}}$ of a simple 
point process $X$ consists of finite sets 
$\{x_1, x_2, \dots, x_n\} \subset 
\mathcal{X}$, $n \in \mathbb{N}_0$, which we equip with the 
Borel $\sigma$-algebra of the weak topology 
\cite[Appendix~A2]{Daley03}. Let
$p$ be a measurable, non-negative function on $\mathcal{X}$ 
that integrates to unity and $\sim$ a symmetric, reflexive
relation on $\mathcal{X}$.  A point process $X$ on 
$\mathcal{X}$ having probability density $p$ with respect 
to a unit rate Poisson process is Markov 
with respect to $\sim$ if, firstly, $p$ is hereditary, that is,
$p(\mathbf{x})>0$ implies that $p(\mathbf{y})
> 0$ for all subsets $\mathbf{y}$ of $\mathbf{x}$, 
and, secondly, the conditional intensity, defined as
\(
{p(\mathbf{x} \cup \{ t \} )} / {p(\mathbf{x})}
\)
with $a/0 = 0$ for $a\geq 0$,
depends only on the neighbourhood $\{ x \in \mathbf{x} : 
x \sim t \}$ of
$t$ in $\mathbf{x}$ for every $t\in \mathcal{X}\setminus
\mathbf{x}$ and every $\mathbf{x} = \{ x_1, \dots,
x_n\} \subset \mathcal{X}$ for which $p(\mathbf{x})>0$
\cite{Lies00, RipleyKelly77}.

An interaction function is a family $\phi_0, \phi_1, \phi_2,
\dots$ of non-negative functions $\phi_i$ defined on 
configurations of $i$ points that take the value one 
whenever the configuration contains a pair $\{ x_1, x_2 \}$ 
of points that are unrelated, that is, $x_1 \not\sim x_2$.
By the Hammersley--Clifford theorem \cite{RipleyKelly77},
writing $|\cdot|$ for cardinality, a Markov density $p$ can 
be factorised as
\begin{equation} \label{eq:HC}
p(\mathbf{x}) = \prod_{\mathbf{y}\subset\mathbf{x}}
 \phi_{|\mathbf{y}|}(\mathbf{y}) 
\end{equation}
for some interaction function $\phi_i$.
The function $\phi_1(x)$ can be used to model temporal
variations in the likelihood of events occurring. Higher 
order terms $\phi_2, \phi_3, \dots$ govern interactions
between pairs, triples or tuples of points. 

The points $x$ in a realisation $\mathbf{x}$ of $X$ are marked 
independently according to a mark kernel $\nu(\cdot | x)$ on 
$\R\times\R^+$. A mark $(a,l)$ represents an interval 
$[a, a+l]$ that starts at $a$ and has length $l$. The mark 
kernel $\nu$ formalises the semi-Markov censoring discussed in 
Section~\ref{sec:semimark-sub}. For demonstrative purposes, 
we assumed a starting time of $0$, which we now set to 
$-\infty$. Doing so also allows us to ignore the singular
component. Hence the appropriate time-dependent mark kernel
$\nu(\cdot | x)$, $x\in\mathcal{X}$, for a Borel subset
$A \subset \mathbb{R} \times \mathbb{R}^+$ is
\begin{align}
    \nu(A | x) &= 
     \left( 1 - {\int_{-\infty}^x 
      \left[1 - G_Y(s, x-s)\right]\,dM(s)}
    \right) 
    \delta( \{ (x,0) \} \cap A) \nonumber \\
    &+ \int_{-\infty}^x\int_{x-a}^{\infty} 
      \mathbf{1}\{ (a,l) \in A \} \, G_Y(a,dl)\, dM(a).
      \label{def:time_dep_measure:semimark}
\end{align}
Write $W$ for the marked point process defined by $p(\cdot)$ and
$\nu(\cdot | \cdot)$ \cite[Prop.~4.IV]{Daley03}.
A realisation $\textbf{w}$ is of the form
\[
\textbf{w} = \{w_1, w_2, \dots, w_n\} = \{(x_1, (a_1, l_1)), (x_2, (a_2, l_2)), \dots, (x_n, (a_n, l_n))\}
\]
for $a_i \leq x_i \leq a_i+l_i$ for all $i = 1, 2, \dots, n$. 
We denote the set of realisations by $\mathcal{N}_{ {\mathcal{X}}
\times (\mathbb{R} \times \mathbb{R}^+) }$. 

The model description is complete by noting that the 
observable pattern of marks after censoring is
\begin{align*}
    U = \bigcup_{(x_i, (a_i,l_i)) \in W} \{ (a_i, l_i) \}.
\end{align*}
To obtain the probability distribution of $U$, write, for
$F$ in the Borel $\sigma$-algebra of the weak topology on
$\mathcal{N}_{ {\mathcal{\mathbb{R} \times \mathbb{R}^+}}}$,
\[
\mathbb{P}(U\in F | X = \mathbf{x} )  =
\int_{(\mathbb{R}\times\mathbb{R}^+)^n} \mathbf{1}( \{
   (a_1, l_1), \dots, (a_n, l_n) \} \in F ) 
   \prod_{i=1}^n d\nu( (a_i, l_i) | x_i),
\]
where $\mathbf{x} = \{x_1, \dots, x_n\}$, and then take the 
expectation with respect to $X$.

\subsection{Conditional distribution}
\label{sec:conditional_dist}

Write $\mathbf{u}$ for a realisation of the interval set $U$. 
We are interested in the conditional distributions of $X$ 
and $W$ given $U = \uu$.

\begin{theorem}\label{thm:cond_prob_viable_semimark}
Let $W$ be a marked point process with ground process $X$
on the open set $\cX \subset \R$ defined by its 
probability density function $p$ with respect to the 
distribution of a unit rate Poisson process having 
independent marks distributed according to the mark
kernel $\nu( \cdot | x) $  for $x \in \cX$ given by 
(\ref{def:time_dep_measure:semimark}).
Let $\uu$ be a realisation of $U$ that consists of an 
atomic part $\{ (a_1, 0), \dots, (a_m, 0) \}$, $m\in\N_0$, 
and a non-atomic
part $\{ (a_{m+1}, l_{m+1}),\, \dots, (a_n, l_n) \}$, 
$n \geq m$.
Then the conditional distribution of $X$ given $U = \uu$ 
satisfies, for $A$ in the Borel $\sigma$-algebra of the 
weak topology on $\cN_\cX$,
\begin{align*}
    \mathbb{P}(X \in A\,|\,U=\mathbf{u}) = c(\mathbf{u})
    & \int_{\mathcal{X}^{n-m}}
      p(\{a_1, ..., a_m, x_{1}, ..., x_{n-m}\})\,
      1_A(\{a_1, ..., a_m, x_{1}, ..., x_{n-m}\}) \\
    &\sum_{\substack{D_1, ..., D_{n-m} \\
    \cup_j \{ D_j \} = \{1, ..., n-m\}}}
    \prod_{i=1}^{n-m}  1_{[a_{m+i}, a_{m+i} + l_{m+i}]}(x_{D_i})\,dx_i 
\end{align*}
provided that the normalisation constant
\begin{align*}
    c(\mathbf{u}) = 1 / \int_{\mathcal{X}^{n-m}}
    p(\mathbf{x} \cup \{a_1, ..., a_m\}) \,
    \sum_{\substack{D_1, ..., D_{n-m} \\ 
    \cup_j \{ D_j \} = \{1, ..., n-m\}}}
    \prod_{i=1}^{n-m} 
    1_{[a_{m+i}, a_{m+i} + l_{m+i}]}(x_{D_i})\,dx_i
\end{align*}
exists in $(0,\infty)$.
\end{theorem}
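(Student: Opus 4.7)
The plan is to work out the joint distribution of $(X,U)$ from the density $p$ of the ground process $X$ with respect to a unit rate Poisson process and the product structure of the marks given $X$, and then invert. For any Borel set $A\subset\cN_\cX$ and any bounded measurable $\psi$ on interval configurations,
\begin{equation*}
\EE[\mathbf{1}(X\in A)\,\psi(U)] = \sum_{N=0}^{\infty} \frac{e^{-|\cX|}}{N!} \int_{\cX^N} p(\{x_1,\dots,x_N\})\,\mathbf{1}_A(\{x_1,\dots,x_N\})\!\int\!\psi\Big(\bigcup_{i=1}^N \{m_i\}\Big) \prod_{i=1}^N d\nu(m_i\mid x_i)\, d\mathbf{x}.
\end{equation*}
I would then evaluate this for $\psi$ concentrated on configurations of the same atomic/continuous type as $\mathbf{u}$ and take a Radon--Nikodym derivative.

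Next I decompose $\nu(\cdot\mid x)=w_x\,\delta_{(x,0)}+\mu_x^{\mathrm{cts}}$ as in (\ref{def:time_dep_measure:semimark}), where $\mu_x^{\mathrm{cts}}$ has Lebesgue density $m(a)\,g_Y(a,l)\,\mathbf{1}\{a\leq x\leq a+l\}$. Expanding the product $\prod_i d\nu(m_i\mid x_i)$ as a sum over subsets $I\subset\{1,\dots,N\}$ labelling which points carry atomic marks, only the terms with $N=n$ and $|I|=m$ contribute, because with probability one no two continuous marks coincide and distinct ground points yield distinct atomic marks. For each such $I$, the Dirac factors force the multiset $\{x_i : i\in I\}$ to equal $\{a_1,\dots,a_m\}$, which collapses $m$ of the Lebesgue integrals and contributes $m!\,\prod_{j=1}^m w_{a_j}$; the $n-m$ continuous factors contribute $\prod_{j=m+1}^n m(a_j)\,g_Y(a_j,l_j)$ when paired bijectively with the remaining points, together with indicator constraints $a_{m+\sigma(i)}\leq x_i\leq a_{m+\sigma(i)}+l_{m+\sigma(i)}$ from the support of $\mu^{\mathrm{cts}}$, summed over bijections $\sigma$ matching remaining points to remaining intervals.

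Collecting terms, the combinatorial factors $\binom{n}{m}\,m!\,(n-m)!$ combine with the Poisson $n!$ and the symmetry of $p$ in its arguments to yield
\begin{equation*}
\EE[\mathbf{1}(X\in A)\,\psi(U)] = C(\mathbf{u})\!\int_{\cX^{n-m}}\! p(\{a_1,\dots,a_m,x_1,\dots,x_{n-m}\})\,\mathbf{1}_A\!\sum_{D}\prod_{i=1}^{n-m}\mathbf{1}_{[a_{m+i},\,a_{m+i}+l_{m+i}]}(x_{D_i})\,d\mathbf{x},
\end{equation*}
where $C(\mathbf{u})$ collects all the $\mathbf{u}$-dependent prefactors and the sum runs over sequences $D$ with $\cup_j\{D_j\}=\{1,\dots,n-m\}$, which, since the two cardinalities coincide, enumerates all permutations. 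Specialising to $A=\cN_\cX$ gives the marginal $\PP(U=\mathbf{u})$; dividing cancels $C(\mathbf{u})$ and yields the stated expression, with $c(\mathbf{u})$ the reciprocal of the remaining denominator integral.

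The main obstacle is the careful handling of the Dirac components of $\nu$. To make rigorous the heuristic of ``$\psi$ concentrated at $\mathbf{u}$'', I would evaluate both sides against $\psi$ supported on configurations with $m$ atoms in small neighbourhoods of $a_1,\dots,a_m$ and $n-m$ continuous marks in small product neighbourhoods of $(a_{m+i},l_{m+i})$, and then pass to the limit via disintegration. Finiteness and positivity of $c(\mathbf{u})$ are given by hypothesis, so no further regularity issue remains. The combinatorial bookkeeping that collapses $\binom{n}{m}\,m!\,(n-m)!$ and the Poisson $n!$ into a single sum over $D$ via symmetry of $p$ is easy to miscount; I would cross-check on the degenerate cases $m=n$ (fully observed) and $m=n-1$ (one censored point).
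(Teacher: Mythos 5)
Your core computation is the same one the paper performs: expand the expectation of a test functional of $(X,U)$ via the Poisson-density representation of $X$, split each mark kernel $\nu(\cdot\mid x)$ into its atom $w_x\delta_{(x,0)}$ and its absolutely continuous part with density $m(a)g_Y(a,l)\1\{a\le x\le a+l\}$, note that a.s.\ $|U|=|X|$ so only $N=n$, $|I|=m$ contributes, and sum over which points carry atoms and over the bijections matching censored points to intervals (since the $D_j$ partition a set of the same cardinality, these are indeed permutations). Where you differ is in how the conditional law is extracted. The paper never divides a joint by a marginal: it takes the candidate formula, plugs it into the defining property $\EE[\1_F(U)\,\PP(X\in A\mid U)]=\EE[\1_F(U)\,\1_A(X)]$ for all Borel $F$, and observes that the inner integral over the censored locations cancels against the normalising constant $c(\uu)$, so no regularity of the law of $U$ is needed beyond the hypothesis that $c(\uu)\in(0,\infty)$. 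Your route instead derives the conditional by localising $\psi$ on shrinking neighbourhoods of $\uu$ and ``passing to the limit via disintegration''; as stated this is the one genuinely delicate step, since the law of $U$ on the configuration space mixes atoms in the length coordinate with a density in the $(a,l)$ coordinates, and a shrinking-neighbourhood differentiation argument there requires real work (choice of differentiation basis, a.e.\ identification of the disintegration) that your sketch does not supply. The good news is that this step is unnecessary: once you have the expansion of $\EE[\1_A(X)\psi(U)]$ for arbitrary bounded measurable $\psi$, you can multiply and divide inside the $(a_j,l_j)$-integrals by the denominator integral defining $c(\cdot)$ and recognise the integrand as $\psi(\uu)$ times your candidate $\PP(X\in A\mid U=\uu)$ times exactly the terms that constitute $\EE[\psi(U)\,\cdot\,]$; this is precisely the paper's verification, and your own bookkeeping (including the $\binom{n}{m}m!(n-m)!$ versus $n!$ count, which does come out right against the symmetry of $p$ and the unordered union $U$) already contains everything needed. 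So: correct combinatorial core, same expansion as the paper, but the final disintegration limit should be replaced by the direct verification of the defining property to make the argument complete.
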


\begin{proof}[Proof.]
We must prove, for each $A$ in the Borel $\sigma$-algebra of
$\cN_\cX$ with respect to the weak topology and each $F$ in the 
Borel $\sigma$-algebra of the weak topology on 
$\cN_{\R\times\R^+}$, that
\begin{equation*}
\EE \left[ \1_F(U) \, \PP( X \in A \mid U ) \right] = 
\EE \left[ \1_F(U) \, \1_A(X) \right],
\end{equation*}
as in equation~4 of \cite{LiesMark23}. From the model
description, writing $w_t$ for the modification of 
(\ref{eq:atom-prob}) over $(-\infty, t)$,
$\ell$ for Lebesgue measure, $| \cdot |$ for cardinality,
\begin{align}
    \mathbb{E}[1_F(U)1_A(X)] &=  
    \sum_{n=0}^{\infty}\frac{e^{-\ell(\mathcal{X})}}
    {n!}\int_{\mathcal{X}^n}
       1_A(\mathbf{x}) \, p( \{ x_1, ..., x_n \}) 
    \sum_{C_0 \subset\{1, ..., n\}}
    \frac{1}{(n - |C_0|)!} \prod_{i\in C_0} w_{x_i}  
      \nonumber \\
    & \int_{(\mathbb{R}\times \mathbb{R}_0^+)^{n-|C_0|}}
    1_F(\{ (a_1,l_1),  \dots, (a_{n-|C_0|}, l_{n-|C_0|}) \} \cup
    (\mathbf{x}_{C_0} \times \{{0}\})) \nonumber \\
    &\sum_{\substack{C_1, ..., C_{n-|C_0|} \\ 
    \cup_j \{ C_j \} = \{1, ..., n\} \setminus C_0}}\,\prod_{j=1}^{n-|C_0|} 
    m(a_j) \, g_Y(a_j, l_j) \, 1_{[a_j, a_j + l_j]}(x_{C_j})
    \, da_j dl_j \prod_{i=1}^n dx_i.
    \label{eq:measure_lhs_term}
\end{align}
Following through for the left-hand side,
\begin{align*}
\EE \left[ \1_F(U) \, \PP( X \in A \mid U ) \right]
    & = \sum_{n=0}^{\infty}
    \frac{e^{-\ell(\mathcal{X})}}{n!} 
    \int_{\mathcal{X}^n} p(\{x_1, \dots, x_n\})  
    \sum_{C_0 \subset\{1, ..., n\}}
    \frac{1}{(n - |C_0|)!}  \prod_{i \in C_0} w_{x_i} \\
    & \int_{(\mathbb{R}\times \mathbb{R}_0^+)^{n-|C_0|}} 
    1_F(\{ (a_1, l_1), \dots, (a_{n-|C_0|}, l_{n-|C_0|}) \} \cup
    (\mathbf{x}_{C_0} \times \{{0}\}))  \\
    & \mathbb{P}(X \in A\,|\,U = \{ (a_1, l_1), \dots, (a_{n-|C_0|}, l_{n-|C_0|}) \} \cup
    (\mathbf{x}_{C_0} \times \{{0}\}))  \\
    &\sum_{\substack{C_1, ..., C_{n-|C_0|} \\ 
    \cup_j \{ C_j\} = \{1, ..., n\} \setminus C_0}}
    \prod_{j=1}^{n-|C_0|} 
    m(a_j) \, g_Y(a_j, l_j)\,1_{[a_j, a_j + l_j]}(x_{C_j})
    \, da_j dl_j \,\prod_{i=1}^n dx_i.
\end{align*}
Next, we plug in the expression for 
\(
    \mathbb{P}(X \in A\,|\,U=\mathbf{u}) 
\)
proposed in the statement of the theorem. 
Upon substitution, changing integration order and rearranging,
we obtain
\begin{align*}
 \EE \left[ \1_F(U) \, \PP( X \in A \mid U ) \right]  
  & = \sum_{n=0}^{\infty} \frac{e^{-\ell(\mathcal{X})}}{n!} 
    \sum_{C_0 \subset\{1, ..., n\}} \frac{1}{(n - |C_0|)!}  
    \int_{\mathcal{X}^n} p(\mathbf{y} \cup \mathbf{x}_{C_0})  \, 1_A(\mathbf{y} \cup \mathbf{x}_{C_0})
    \prod_{i \in C_0} w_{x_i} \\
    &\int_{(\mathbb{R}\times \mathbb{R}_0^+)^{n-|C_0|}}  
    1_F(\{ (a_1,l_1),  \dots, (a_{n-|C_0|}, l_{n-|C_0|}) \} \cup
    (\mathbf{x}_{C_0} \times \{{0}\}))  \\ 
    & c( \{ (a_1, l_1), \dots, (a_{n-|C_0|}, l_{n-|C_0|}) \cup
    (\mathbf{x}_{C_0} \times \{ 0 \} ) ) \\
   &   \sum_{\substack{D_1, ..., D_{n-|C_0|} \\ 
    \cup_j \{ D_j \} = \{1, ..., n\} \setminus C_0}}\, 
    \prod_{j=1}^{n-|C_0|} m(a_j) \, g_Y( a_j, l_j) \,
    1_{[a_j, a_j + l_j]}(y_{D_j})  \\
    & \left( \int_{\mathcal{X}^{n-|C_0|}} 
    p(\mathbf{x})
    \sum_{\substack{C_1, ..., C_{n-|C_0|} \\
    \cup_j \{ C_j \} = \{1, ..., n\} \setminus C_0}}\,
    \prod_{j=1}^{n-|C_0|} 1_{[a_j, a_j+l_j]}(x_{C_j}) \,
    \prod_{j\not\in C_0} dx_j \right)
    \prod_{j=1}^{n-|C_0|} da_j dl_j \\
    & \prod_{i\in C_0} dx_i \prod_{k=1}^{n-|C_0|} dy_k 
    = \mathbb{E}[1_F(U)1_A(X)]
\end{align*}
since the term within brackets cancels out against the
normalisation constant $c(\cdot)$.
\end{proof}

Strikingly, although the marking mechanism is more complicated
than that in \cite{LiesMark23}, the conditional distribution of
$X$ has the same form.

The conditional distribution of $W$ can be obtained in the
same vein, by considering $1_{A}(W)$  instead of 
$1_A(X)$ for $A$ a Borel set in
$\mathcal{N}_{\mathcal{X}\times(\mathbb{R}
\times\mathbb{R}^+)}$, the space of marked point
configurations, is given by
\begin{align*}
\mathbb{P}(W \in A\,|\,U=\mathbf{u})
    & \propto \int_{\mathcal{X}^{n-m}}
      p(\{a_1, ..., a_m, x_{1}, ..., x_{n-m}\})\,
      1_A(\{ (a_1, (a_1, 0)), \dots, (a_m, (a_m, 0)), \\
     & ( x_{1}, (a_{m+1},l_{m+1})), \dots , 
   (x_{n-m}, (a_{m+1}, l_{m+1} ) )\}) 
   \prod_{i=1}^{n-m}  1_{[a_{m+i}, a_{m+i} + l_{m+i}]}(x_{i})\,dx_i .
\end{align*}

\section{Modelling considerations}
\label{sec:example}

In this section, we will consider parametric forms for  
$p(\cdot)$ and $\nu(\cdot | x)$.

\subsection{Non-homogeneous point process densities}
\label{sec:non_homogeneous_density}

We will first look at inhomogeneity that manifests itself  
via the occurrence time distribution. In view of (\ref{eq:HC}), 
it is natural to add inhomogeneity by means of the first-order
interaction function $\phi_1$, a procedure known as type~I 
inhomogeneity \cite{VedelJensen01}. The idea is to let 
$\phi_1(\{ x \}) = \beta(x)$ vary over time according to
a measurable function $\beta$ that maps $x \in \mathcal{X}$ to 
$[0,\infty)$. In many applications, it may make sense to model
$\beta$ as a step function. More specifically, given a measurable 
partition $B_k$, $k=1, \dots, K$, of $\mathcal{X}$, set
\begin{align}
    \beta(x) = \sum_{k=1}^{K}\beta_k\mathbf{1}_{B_k}(x), 
    \qquad x \in \mathcal{X} 
    \label{eq:beta_step}
\end{align}
where $\beta_k \geq 0$ is the value that $\beta$ takes in 
the corresponding set $B_k$. 

The function $\phi_1$ can be combined with classic 
second and higher order interaction functions. For instance,
the density of the non-homogeneous area-interaction point 
process \cite{Badd1995} becomes
\begin{align}
    p(\mathbf{x}) = \alpha_p \left( 
    \prod_{x \in \mathbf{x}} \beta(x) 
    \right)
    \exp\left[-\log \gamma\, 
   \ell( \cX \cap U_r(\mathbf{x}) ) \right]
   \label{eq:area_int}
\end{align}
with respect to a unit rate Poisson process on $\mathcal{X}$.
The parameter $\gamma$ quantifies the interaction strength, 
$r$ the radius of interaction, and 
$\alpha_p = c(\beta(\cdot), \gamma)$ is a normalisation 
constant \cite{Badd1995} that depends on the function $\beta$
as well as on $\gamma$. Additionally, $U_r(\mathbf{x}) = 
\bigcup_{i=1}^n B(x_i, r)$ where $B(x_i,r)$ is the closed
interval $[ x_i-r, x_i + r]$. We observe regularity for 
$\gamma < 1$,  clustering for $\gamma > 1$, and $\gamma = 1$ 
corresponds to a non-homogeneous Poisson process with 
intensity function $\beta$. For further examples, we refer
to \cite{Lies00}.

\subsection{Parametric modelling of the mark kernel}
\label{sec:forward}

To proceed, parametric forms for $G_Y$ and $m$ must be
developed. We begin by modelling $G_Y$, the semi-Markov 
kernel that determines the length of time until the 
next transition. We may take one of the time-dependent 
probability density functions considered in 
Section~\ref{sec:hazard_rates}. For instance,
$g_Y(a, l)$ could be the density function of
an exponential distribution with rate 
\begin{align}
    \lambda(a; \alpha) = \alpha \left(b + \sin(c a) \right), 
    \qquad a \in \mathbb{R},
    \label{eq:lambda_func_exp}
\end{align}
where $c$ specifies the period and $b \geq 1$ the elevation 
away from $0$. The parameter $\alpha$ determines the amplitude 
of the harmonic.

We could proceed in a similar fashion for $g_Z$. However,
there are two problems with such an approach. From a probabilistic
point of view, tractable expressions for the renewal density 
$m$ in terms of the semi-Markov kernels $G_Y$ and $G_Z$ do not 
seem to exist, and, statistically speaking, lengths of $Z$ phases 
cannot be observed. Therefore, we shall model $m$ directly. 
The following proposition justifies this approach.

\begin{proposition} \label{prop:exist-renewal}
Let $(S_n, X_n)_{n=1}^\infty$ be a semi-Markov process on $\{ 1 \}
\times \mathbb{R}^+$ with $S_0 = 1$ and $X_0=0$ having semi-Markov
kernel $G_Y$ defined by a density function $g_Y(t, \tau)$, 
$t\in\mathbb{R}^+$, $\tau \in [0,\infty)$ and write $\Tilde m$
for the density of its renewal function
\[
\Tilde M(t) = \sum_{n=1}^\infty \PP( X_n \leq t ).
\]
If $h(t): \mathbb{R^+} \to [0,\infty)$ is a Borel-measurable
function such that $h(t) \leq \Tilde{m}(t)$, then there exists an
alternating semi-Markov process on $\{ 0, 1 \} \times \mathbb{R}^+$
with $G_{01} = G_Y$ and renewal density $h$.
\end{proposition}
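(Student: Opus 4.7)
The plan is to construct the alternating semi-Markov process by thinning the jumps of the pure $Y$-process. Since $h\le\tilde m$ pointwise, each jump can be retained with a time-dependent probability in $[0,1]$; the retained jumps will serve as the cycle-completion times $X_{2k}$ of the alternating process, while the strong Markov property of the pure $Y$-process at its jumps will automatically produce $Y$-phases with the desired kernel $g_Y$, and the thinning will tune the renewal density to equal $h$ by Campbell's formula.

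Concretely, I would first realise the pure $Y$-process $(\tilde X_n)_{n\ge 1}$ together with an independent i.i.d.\ sequence $(U_n)$ of uniforms on $[0,1]$. Define $B_n := \mathbf{1}\{U_n \le p(\tilde X_n)\}$, where $p(s):=h(s)/\tilde m(s)$ with the convention $0/0:=0$ (legitimate because $h\le \tilde m$), and enumerate the retained jumps as $0<\hat Y_1<\hat Y_2<\dots$. Set $X_0:=0$, $\hat Y_0:=0$, $S_0:=1$, and for each $k\ge 1$ let $X_{2k-1}$ be the first pure-$Y$ jump strictly after $\hat Y_{k-1}$, $X_{2k}:=\hat Y_k$, alternating the $S_n$. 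Three things then have to be verified. First, the strong Markov property of the pure $Y$-process at $X_{2k-2}$ gives that $T_{2k-1}$ conditional on $X_{2k-2}=x$ has density $g_Y(x,\cdot)$, so the $Y$-phase kernel matches the hypothesis. Second, given $X_{2k-1}=y$, the sojourn $T_{2k}$ equals $0$ if the jump at $y$ is retained and otherwise equals the waiting time to the next retained jump; both alternatives depend on $y$ alone, through the Markov restart of the pure process at $y$ and the independence of the future $B_n$'s from the past, so a legitimate semi-Markov kernel $g_Z$ is induced. Third, by Campbell's formula,
\[
M(t) \;=\; \EE\sum_{n} \mathbf{1}(\tilde X_n\le t)\,B_n \;=\; \int_0^t p(s)\,\tilde m(s)\,ds \;=\; \int_0^t h(s)\,ds,
\]
so the renewal density is indeed $h$.

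The main obstacle is the second of these verifications, namely that the constructed $(S_n,X_n)$ genuinely satisfies the semi-Markov property (\ref{eq:markov_renewal}): one has to show that the joint conditional law of $(S_{n+1},T_{n+1})$ given the whole history depends only on $(S_n,X_n)$. This reduces to disentangling the retention mechanism from the pure-$Y$ history and invoking the strong Markov property of the pure process at $X_{2k-1}$; the case split according to whether $X_{2k-1}$ is itself retained is the only fiddly bookkeeping point, but is harmless because the $U_n$'s are i.i.d.\ and independent of $(\tilde X_n)$. A minor side issue is that when $\int_0^\infty h<\infty$ the thinned sequence may be finite, in which case the alternating chain terminates by an infinite final $Z$-phase; this still yields a valid trajectory on $\{0,1\}\times\mathbb{R}^+$ and does not affect the existence claim.
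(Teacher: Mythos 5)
Your proposal is correct and follows essentially the same route as the paper: both thin the jumps of the pure $Y$-process with time-dependent retention probability $p(s)=h(s)/\Tilde m(s)$, let the retained jumps mark the cycle completions $X_{2k}$ (so the inter-jump interval after a retained point is a $Y$-phase with kernel $G_Y$ and the gaps, possibly of length zero, become $Z$-phases), and compute the renewal density as $p\,\Tilde m = h$. Your explicit verification of the semi-Markov property of the thinned process and the remark about a possibly finite number of retained points are details the paper only asserts or leaves implicit, but they do not change the argument.
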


\begin{proof}
As $0 \leq {h(t)} / {\Tilde{m}(t)} \leq 1$, we may use a 
time-dependent thinning approach with retention probability 
$p(t) =  {h(t)} / {\Tilde{m}(t)}$. 
Algorithmically, the sought-after process can be constructed
as follows.
Initialise $\Hat{S}_0 = 1$, $\Hat{X}_0 = 0$ and $\Hat{X}_1
= X_1$. Also set 
$\Hat{S}_{2i} = 1$, $\Hat{S}_{2i-1} = 0$ for $i\in\N$
and $j=1$.
For each jump time $X_i$, $i=1, 2, \dots$,
\begin{itemize}
    \item with probability $p( X_{i})$, 
    if $j$ is even, update $\Hat{X}_{j+1} = X_{i+1}$ and 
    increment $j$ by $1$; for odd $j$ update
    $\Hat{X}_{j+1} = \Hat{X}_j$, $\Hat{X}_{j+2} = X_{i+1}$
    and increment $j$ by $2$;
    \item else, 
    if $j$ is odd, update $\Hat{X}_{j+1} = X_{i+1}$ and 
    increment $j$ by $1$; for even $j$ update $\Hat{X}_j =
    X_{i+1}$ leaving $j$ unchanged.
\end{itemize}
Because complete cycles correspond to intervals in between
accepted points
$X_i$, $i=0, 1,2,\dots$,
\[
H(t) = \sum_{n=1}^\infty \PP( \Hat{X}_{2n} \leq t ) = 
\sum_{n=1}^\infty \PP( X_n \leq t; X_n\,{\rm{retained}}) = 
\int_0^t \frac{h(s)}{\Tilde{m}(s)} d\Tilde M(s).
\]
We can therefore conclude that the intensity of the thinned 
process is 
$\frac{h(t)}{\Tilde{m}(t)} \Tilde{m}(t) = h(t)$ (see e.g.\
\cite[pp.~78--79]{Daley03} and hence 
$(\Hat S_n, \Hat X_n)_{n=1}^\infty$ is a non-homogeneous 
alternating semi-Markov process that satisfies the proposed 
conditions.
\end{proof}

As an illustration, suppose that $h$ is a step function
\begin{align}
    m(t) = \sum_{j=1}^{J} \delta_j\mathbf{1}_{A_j}(t), 
 \quad t\in \mathbb{R^+},
 \label{eq:renewal_approximation}
\end{align}
that takes $J$ different values $\delta_j > 0$ on Borel sets $A_j$ 
forming a partition of the half line ($j=1,\dots, J$, with $J\in\N$).
The following corollary lays out conditions under which $h=m$ is the 
renewal density of an alternating renewal process whose $Y$-phases
are governed by (\ref{eq:lambda_func_exp}). 

\begin{corollary}
\label{prop:bound_for_renewalfunc} 
A sufficient condition for (\ref{eq:renewal_approximation}) to be the
renewal density of an alternating semi-Markov process on $\{0,1\} \times\R^+$
with $G_Y$ given by (\ref{eq:lambda_func_exp}) on $\R^+$
is that for all $j=1, \dots, J$ we have $\delta_j \leq \alpha(b-1)$.
\end{corollary}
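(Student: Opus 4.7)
The natural strategy is to invoke Proposition~\ref{prop:exist-renewal} with $h=m$. This requires establishing the pointwise domination $m(t)\leq\tilde m(t)$ for all $t\geq 0$, where $\tilde m$ is the renewal density of the $Y$-only semi-Markov process on $\{1\}\times\R^+$ whose sojourns are exponentially distributed with the time-dependent rate (\ref{eq:lambda_func_exp}). Once this is in place, the thinning construction in the proof of Proposition~\ref{prop:exist-renewal} yields an alternating semi-Markov process with $G_{01}=G_Y$ and renewal density equal to $m$, which is exactly what the corollary asserts.

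Since $m(t)=\sum_{j=1}^J\delta_j\,\mathbf{1}_{A_j}(t)\leq\max_j\delta_j\leq\alpha(b-1)$ by hypothesis, the whole task reduces to the uniform lower bound
\[
 \tilde m(t)\;\geq\;\alpha(b-1)\qquad\text{for a.e.\ }t\geq 0.
\]
To obtain this, I would appeal to the conditional-intensity description used throughout Section~\ref{sec:hazard_rates}. For the $Y$-only process the calculation (\ref{eq:hazard_rate_semimark_derivation}) with $g_Y(a,l)=\lambda(a;\alpha)e^{-\lambda(a;\alpha)l}$ collapses to
\[
 \lambda_{n+1}(x;x_1,\ldots,x_n)\;=\;\lambda(x_n;\alpha)\;=\;\alpha\bigl(b+\sin(cx_n)\bigr)\;\geq\;\alpha(b-1),
\]
so that the stochastic intensity of the counting process $\tilde N(t)=\#\{n\geq 1:X_n\leq t\}$ evaluated at $t$ is $\lambda(X_{\tilde N(t-)};\alpha)\geq\alpha(b-1)$ uniformly in $\omega$. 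Taking expectations in the compensator identity $\tilde M(t)=\EE\int_0^t \lambda(X_{\tilde N(s-)};\alpha)\,ds$ and differentiating gives $\tilde m(t)=\EE[\lambda(X_{\tilde N(t-)};\alpha)]\geq\alpha(b-1)$ a.e. Combined with the first paragraph, this delivers $m(t)\leq\alpha(b-1)\leq\tilde m(t)$, so Proposition~\ref{prop:exist-renewal} applies and the corollary follows.

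The main obstacle, and essentially the only non-trivial step, is verifying the compensator identity for the non-homogeneous semi-Markov process rigorously enough to extract the pointwise lower bound on $\tilde m$; the non-explosion result (Proposition~\ref{prop:semimark_gamma} with $k\equiv 1$, since $\lambda(a;\alpha)\leq\alpha(b+1)$ is bounded) guarantees that $\tilde N(t)$ is integrable and that $\tilde m$ exists as a locally integrable density, after which the bound is immediate. Everything else is bookkeeping: once the retention probability $m(t)/\tilde m(t)$ takes values in $[0,1]$, the thinning algorithm from the proof of Proposition~\ref{prop:exist-renewal} produces the required alternating semi-Markov process on $\{0,1\}\times\R^+$.
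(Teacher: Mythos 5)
Your proposal is correct and follows the paper's overall skeleton — reduce the claim to the chain $m(t)\leq\alpha(b-1)\leq\tilde m(t)$ and then invoke Proposition~\ref{prop:exist-renewal} with $h=m$ — but it differs at the one substantive step, the lower bound on $\tilde m$. The paper establishes it by the comparison criterion of Haezendonck and De Vylder: it notes $\inf_t\lambda(t;\alpha)=\alpha(b-1)$, compares the $Y$-only jump process with a Poisson process of intensity $\nu=\alpha(b-1)$, obtains $\nu t\leq\tilde M(t)$ from their Corollary~1, and from this passes to $\nu\leq\tilde m(t)$. You instead use the compensator identity $\tilde M(t)=\EE\int_0^t\lambda(X_{\tilde N(s-)};\alpha)\,ds$ together with the pointwise bound $\lambda(\cdot;\alpha)\geq\alpha(b-1)$, and differentiate. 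Your route has two advantages: it yields the pointwise (a.e.) bound $\tilde m(t)\geq\alpha(b-1)$ directly, whereas the paper's passage from the integrated inequality $\nu t\leq\tilde M(t)$ to the density inequality is stated without comment and really needs the comparison applied on every subinterval (or a restart argument); and, via Tonelli, it gives absolute continuity of $\tilde M$ for free, i.e.\ the existence of $\tilde m$, which Proposition~\ref{prop:exist-renewal} otherwise simply assumes. The price is that you must justify the compensator identity for the non-homogeneous semi-Markov jump process; this is standard (e.g.\ \cite{Karr91}) once integrability of $\tilde N(t)$ is secured, and your appeal to the exponential case ($k\equiv 1$, rate bounded by $\alpha(b+1)$, Poisson domination as in Corollary~\ref{prop:semimark_renewal_pois}) does secure it. One small correction of record: the conditional intensity of the $Y$-only process is $\lambda(x_n;\alpha)$, evaluated at the last jump time, exactly as you write; the paper's proof writes $\lambda(t,\alpha)$ at this point, but the infimum bound used afterwards is unaffected.
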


\begin{proof}
For (\ref{eq:renewal_approximation}) to induce a semi-Markov
process, we require $h(t) \leq \Tilde{m}(t)$, where $\Tilde{m}(t)$ 
is the Radon-Nikodym derivative of the renewal function 
$\Tilde{M}(t)$. In Proposition~\ref{prop:exist-renewal} we defined
\[ \Tilde M(t) = \sum_{n=1}^\infty \PP( X_n \leq t ),\]
with $\left(X_n\right)_{n=0}^{\infty}$ being its associated jump 
process of only $Y$-phases. By construction, its conditional 
intensity is $\Tilde{\lambda}_{n+1}(t; t_1, \dots, t_n) = 
\lambda(t, \alpha)$ for all
$0 \leq t_1\leq \dots \leq t_n \leq t$.

Observe that $\inf \{ \lambda(t; \alpha) : t\in \R \} = 
\alpha(b-1)$. Construct a Poisson process $N^*(t)$
with intensity $\nu = \alpha(b-1)$. By 
\cite[Corollary~1]{Haezendonck80}, since 
$\lambda_{n+1}^*(t; t_1, \dots, t_n) \leq 
\Tilde{\lambda}_{n+1}(t; t_1, \dots, t_n)$, we may conclude 
that the renewal function $\nu t$ of $N^*(t)$ is bounded
from above by $ \Tilde{M}(t)$ for all $t$. Hence also 
$\nu \leq \tilde m(t)$. 

For $h=m$ as in (\ref{eq:renewal_approximation}), in 
order to have 
$\sum_{j=1}^{J} \delta_j\mathbf{1}_{A_j}(t) \leq \alpha(b-1)$, 
it is sufficient that $\delta_j \leq \alpha(b-1)$ for all 
$j = 1, \dots, J$ to guarantee that $m(t)$ is the renewal 
density of a semi-Markov process. 
\end{proof}

As noted before, in practice, the starting point $0$ is moved back 
to $-\infty$. Realisations $\mathbf{u}$ from the specified model 
may be obtained as follows.
First, a set of points $\mathbf{x} \subset \mathcal{X}$ in 
time are chosen according to the probability density 
function $p(\cdot)$ by, for example, coupling from
the past \cite{Kendall98} or the Metropolis--Hastings 
algorithm \cite{Geyer94}. Next, for each point $x 
\in\mathbf{x}$, it is determined whether or not it 
is an atom based on $w_x$. If this 
is not the case, we appeal to 
Proposition~\ref{prop:forward-sample} and 
use rejection sampling with a proposal distribution that
simulates a uniformly distributed point in
$A_j \cap (-\infty, x]$ 
chosen with probability 
\(
\delta_j \ell(A_j\cap(-\infty,x]) / 
   \sum_{i =1}^J \delta_i \ell(A_i \cap(-\infty, x]) 
\)
and acceptance probability 
$\exp[ - \lambda(a; \alpha) (x - a) ]$. The result is a 
sample $a$ from $f_x(a)$, cf.\ (\ref{eq:marginal_dist_sm_a}).
The length
is then sampled according to an exponential distribution
with parameter $\lambda(a; \alpha)$ shifted by $x-a$
(see (\ref{eq:conditional_dist_sm_a})).  It is interesting 
to observe that, in contrast to the 
alternating renewal case studied in \cite{LiesMark23},
using the marginal distribution with respect to $A$ 
and then the conditional given $A$ is computationally 
simpler than sampling $L$ first.

\subsection{Statistical aspects}

In practical applications, both the family of probability 
density functions $g_Y(t, \tau; \theta)$ for the sojourn 
times in phase $Y$ and the function $m(t; \xi)$ rely on
unknown parameters $\eta = (\theta, \xi)$ that must be 
estimated. The log-likelihood $L(\eta; \mathbf{u})$ follows directly from 
(\ref{eq:measure_lhs_term}). Upon observing
$\mathbf{u} = \{ (a_1, 0), \dots, (a_m, 0),$
$(a_{m+1}, l_{m+1}),$ $ \dots, (a_n, l_n) \}$,
\begin{align}
    L(\eta; \mathbf{u}) &= 
    \sum_{i=1}^{m} \log\left(  1 - \int_{-\infty}^{a_i} 
      [1 - G_Y(s, a_i-s; \theta)]\,m(s;\xi) \, ds \right) 
      + \sum_{i=m+1}^n \log\left( 
         m(a_i;\xi) \, g_Y(a_i, l_i;\theta) \right).
    \label{eq:likelihood_semimark}
\end{align}
When the sojourn time distributions $G_Y$ and $G_Z$ and hence 
the renewal density $m \equiv ( \mathbb{E} Y + 
\mathbb{E} Z )^{-1}$ are not time-varying, 
(\ref{eq:likelihood_semimark}) reduces to the renewal 
likelihood in \cite{LiesMark23}. 

We will illustrate the procedure by means of
a specific example. For the sojourn times, we take an
exponential model; for the function $m$, we use 
(\ref{eq:renewal_approximation}).  Assume that 
$G_Y(t, \cdot)$, $t\in \mathbb{R}$, is distributed 
exponentially with rate parameter $\lambda(t)$ as in
(\ref{eq:lambda_func_exp}) and $m$ given by 
(\ref{eq:renewal_approximation}). 
In the homogeneous case that $\lambda(t) \equiv \alpha > 0$
(that is, $b=1$ and $c=0$), $J = 1$, 
$A_1 = \mathbb{R}$ and $0 \leq \delta_1 \leq \alpha$,
\begin{align*}
     L(\alpha, \delta_1; \mathbf{u}) &= 
  m \log \left( 1 - \frac{\delta_1}{\alpha}\right) + 
  (n-m) \log \delta_1 + (n-m) \log \alpha 
  - \alpha \sum_{i=m+1}^n l_i.
\end{align*}
In general, the atom probability for a given time 
$x \in \mathcal{X}$ is
\begin{align}
    w_x = 1 - \sum_{j=1}^{J} \delta_j 
    \int_{A_j \cap (-\infty, x]} 
    e^{-(x-s)\lambda(s)}\,ds.
    \label{eq:atom-prob-exp}
\end{align}
The likelihood equation (\ref{eq:likelihood_semimark})
after substitution and discarding of terms that do not
depend on the parameters becomes
\begin{align*}
    L(\delta, \alpha; \mathbf{u}) 
    &= \sum_{i=1}^{m} \log \left( 
      1 - \sum_{j=1}^{J} \delta_j
      \int_{(a_i - A_j) \cap [0,\infty)]}
      e^{-\alpha r(b +\sin(ca_i-cr))}\,dr
      \right) 
      + \sum_{i=m+1}^n 
    \log\left(\sum_{j=1}^{J} \delta_j\mathbf{1}_{A_j}(a_i)\right) \\
    &+ (n-m) \log \alpha -\alpha \sum_{i=m+1}^{n} 
      l_i \left( b + \sin\left(c a_i\right) \right).
\end{align*}
The resulting  equations can be solved numerically to 
find optimal values for $\delta_k, k = 1, \dots, J$, and 
$\alpha$ under the inequality constraints $0 \leq \delta_j \leq \alpha(b-1)$, $j=1, \dots, J$.

The distribution of unobserved occurrence times may also be 
considered as a parameter to be estimated using the
reported intervals. To do so, since the form of the 
conditional distribution of $W$ given $U$ according to
Theorem~\ref{thm:cond_prob_viable_semimark} is identical
to that for alternating renewal process-based censoring,
the simulation techniques developed in \cite{LiesMark23} 
to obtain realisations of the marked occurrence times given
a sample $\mathbf{u}$ of $U$ apply. Briefly, estimation 
of any parameters involved in $p(\cdot)$, for instance 
the $\beta_k$ in (\ref{eq:beta_step}), requires a 
Monte Carlo EM approach \cite{Geye99}. Once the parameters 
have been estimated, a Metropolis-Hastings algorithm 
\cite{Brooks11, MeynTweed09, Moller04} for a fixed number 
of points can be used. For further details and conditions 
under which the algorithm converges to the desired distribution,
we refer to Propositions~4.3--4.5 in \cite{LiesMark23}.

\section{Illustrations in practice}
\label{sec:simulation}

To show how the non-homogeneous semi-Markov model behaves,
we present a few examples that compare the new model with a 
homogeneous one. Recall that, broadly speaking, there 
are three sources of inhomogeneity: the interval lengths as governed 
by $g_Y$, the renewal density $m$, and the ground process responsible for 
the uncensored event occurrences. Throughout this section, we
set $\cX = (0,1)$.

\subsection{Model mis-specification}
\label{sec:inhomog_misspec}

The first source of inhomogeneity in our  model is the semi-Markov 
kernel $G_Y(a,l)$ for starting point $a\in \mathbb{R}$ and length $l\geq 0$, 
which determines the time until the next transition. For specificity, let us
assume that the actual interval censoring mechanism is governed by a Weibull 
distribution with shape parameter $k=1$ and rate parameter 
$\lambda_Y(t;\alpha) = \alpha(1.6+\sin(2\pi t))$ for $\alpha = 1$. 
Regarding the other model ingredients, we take $p(\cdot)$ of the
form given in (\ref{eq:area_int}) with $\beta = 400$ and $\gamma = 1$, that is, a 
homogeneous Poisson process on $\mathcal{X}$ with intensity $400$. We additionally
set $m(t) = 0.6\, \mathbf{1}_{ [-0.2, 1) }(t)$. 

To illustrate the effect of erroneously assuming a homogeneous 
model, we sample a realisation from the actual model and fit a 
Weibull distribution with parameters $k>0$ and constant 
rate $\lambda_Y(t; \alpha) \equiv \alpha$ for $\alpha > 0$. 
We obtained parameter estimates $\Hat{k} = 0.9$ and $\Hat{\alpha} 
= 2.0$. The graphs of the survival time densities for 
$t=0.6$ for both models are shown in Figure~\ref{fig:survival_sm}. The 
homogeneous model is able to roughly discern the shape of the distribution, but struggles with the scale. Compared 
to the actual model, for $t=0.6$, it  generates more intervals 
shorter than about $0.5$ and fewer of longer length.

\begin{figure}[H]
    \centering
    \includegraphics[width=.8\textwidth]{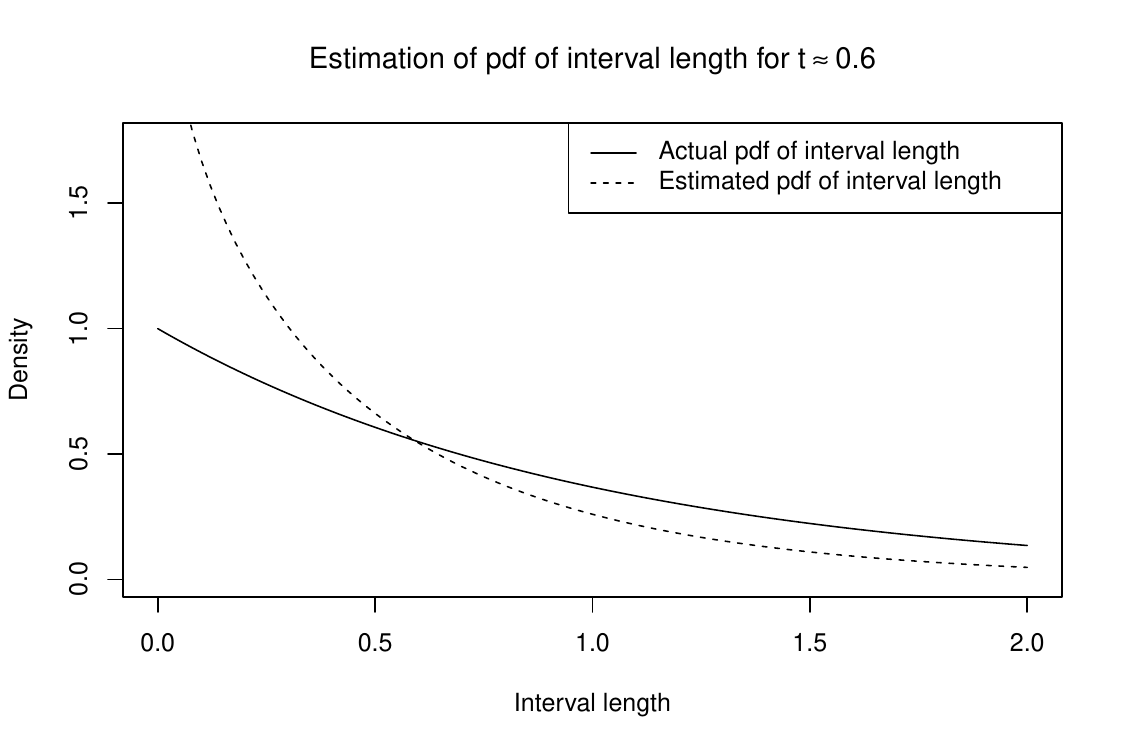}
    \caption{The solid line is the actual probability density of interval length for $k=1$ and $\lambda(0.6;1) = 1$. The 
    broken line is the estimated survival time density.}
    \label{fig:survival_sm}
\end{figure}

\subsection{Inhomogeneity in renewal density and survival time}
\label{sec:inhomog_renewal}

\begin{figure}[thb]
    \centering
    \begin{subfigure}[b]{.45\textwidth}
        \centering
        \includegraphics[width=\textwidth]{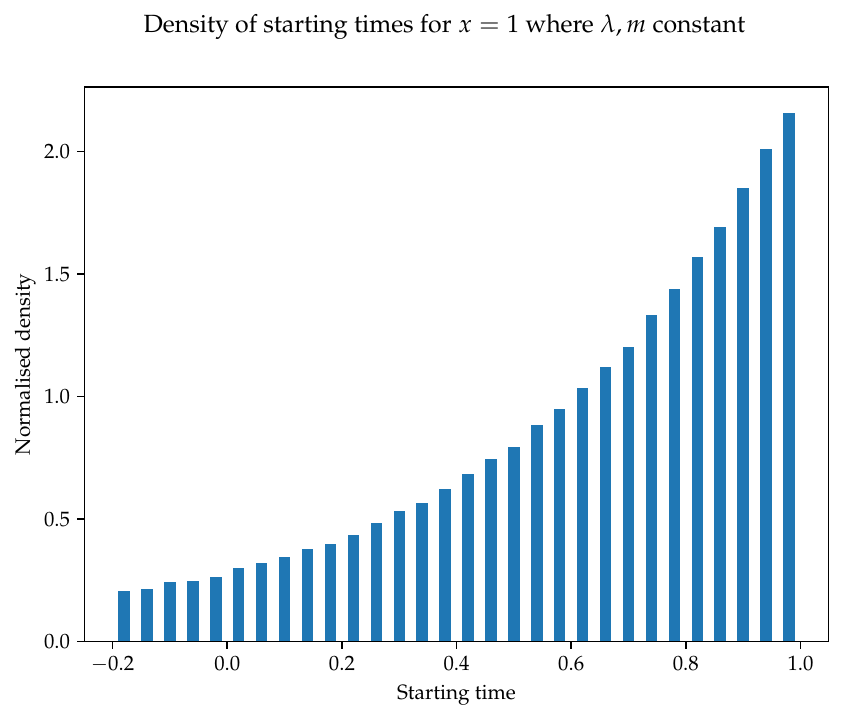}
        \caption{$g_Y$ fixed rate exponential, $m$ constant}
    \end{subfigure}
    \begin{subfigure}[b]{.45\textwidth}
        \centering
        \includegraphics[width=\textwidth]{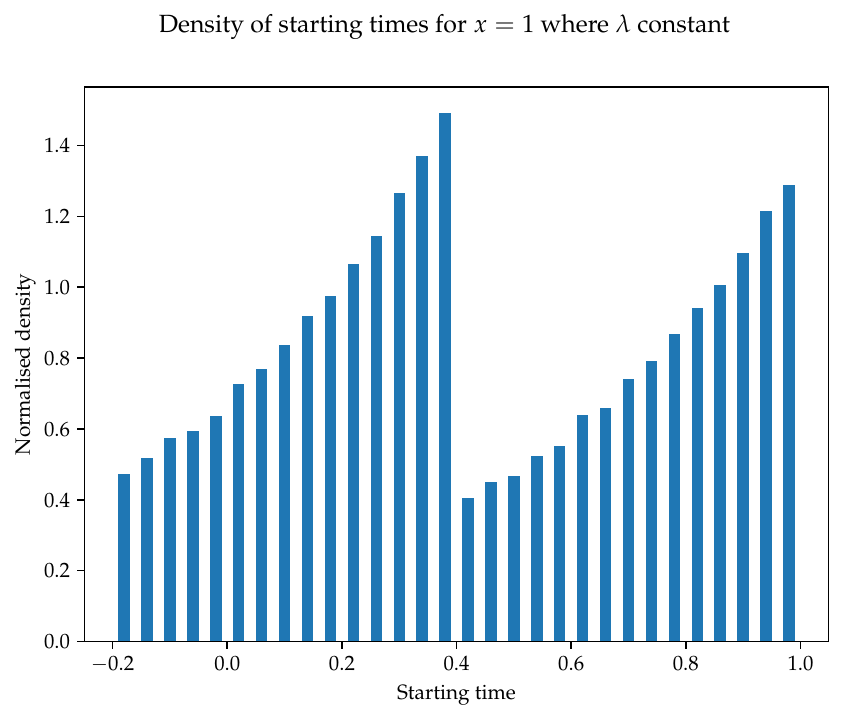}
        \caption{$g_Y$ fixed rate exponential, $m$ varying}
    \end{subfigure}
    \\
    \begin{subfigure}[b]{.45\textwidth}
        \centering
        \includegraphics[width=\textwidth]{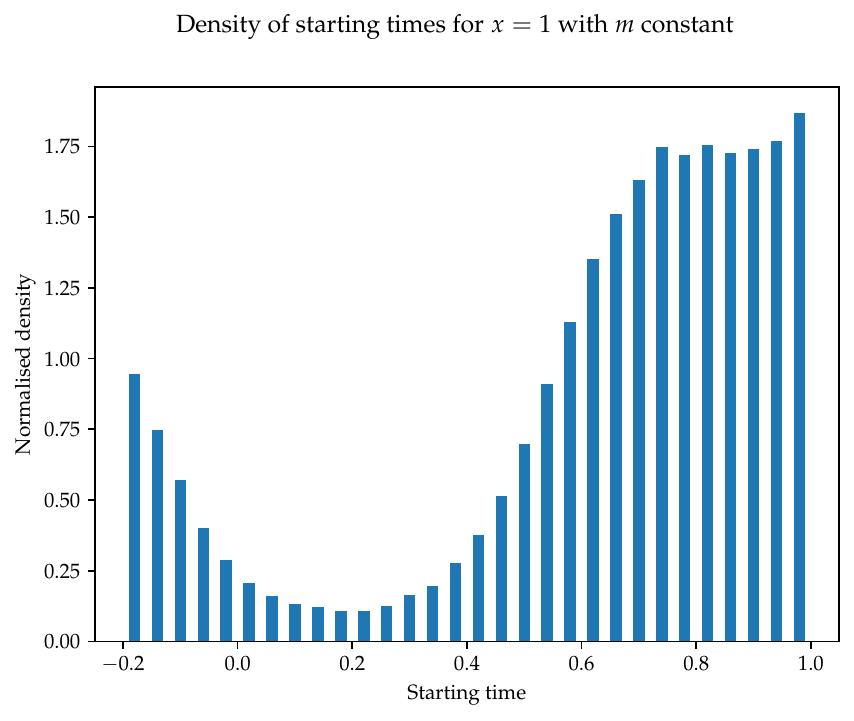}
        \caption{$g_Y$ time-dependent exponential, $m$ constant}
    \end{subfigure}
    \begin{subfigure}[b]{.45\textwidth}
        \centering
        \includegraphics[width=\textwidth]{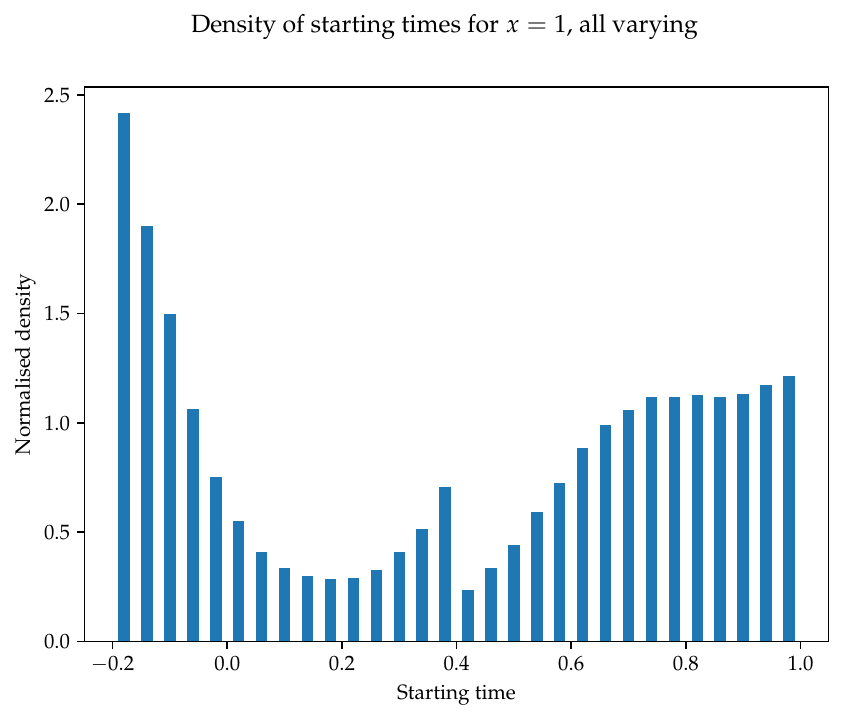}
        \caption{$g_Y$ time-dependent exponential, $m$ varying}
    \end{subfigure}
    \caption{Probability density function of the starting time $f_x(\cdot)$ with $x=1$
    for various choices of $g_Y$ and $m$.}
    \label{fig:renewal_sm}
\end{figure}

In our second experiment, we add inhomogeneity in $m$ to the the model and
study the effect on $f_x$, cf.\ (\ref{eq:marginal_dist_sm_a}). As 
in Section~\ref{sec:inhomog_misspec}, consider an exponential semi-Markov 
density $g_Y$ with rate parameter either constant, $\lambda(t;\alpha) = 1.3\alpha$,
or varying in time according to $\lambda(t; \alpha) = \alpha(1.3 + \sin(2\pi t))$.
Furthermore, set $m(t) = 0.4$ for $t \in [-0.2, 1)$ in the constant case, and 
\[
m(t) = \begin{cases}
            0.4 & t \in [-0.2, 0.4) \\
            0.1 & t \in [0.4, 1)
        \end{cases}
\]
in the time-varying case. We set $\alpha = 1.6$, so the largest
value of $\delta_i$ which guarantees that $g_Y$ is the Radon-Nikodym derivative 
of a semi-Markov kernel is $1.6 \times ( 1.3 - 1 ) = 0.48$.
Figure~\ref{fig:renewal_sm} shows the graphs of $f_x(\cdot)$ for the four 
possible combinations of $g_y$ and $m$ obtained from 
200,000 samples from $q_x$ for $x=1$. In Figures~\ref{fig:renewal_sm}a and 
\ref{fig:renewal_sm}b, we assume that $\lambda$ is constant. When 
$m$ is also constant as in Figure~\ref{fig:renewal_sm}a, the marginal
distribution of the starting times, by Proposition~\ref{prop:forward-sample}, 
is a shifted exponential distribution. If $m$ is allowed to vary in time,
the exponential curve is broken at $t=0.4$, the discontinuity point of $m$,
resulting in a zigzag pattern. In both Figures~\ref{fig:renewal_sm}a and 
\ref{fig:renewal_sm}c, $m$ is constant, but in Figure~\ref{fig:renewal_sm}c 
the rate parameter of $g_Y$ varies according to a harmonic. The resulting
sinusoidal modulation is clearly visible.  Finally, allowing $m$ to vary 
too results in a break at its discontinuity point $t=0.4$ as seen 
in Figure~\ref{fig:renewal_sm}d.

\subsection{Inhomogeneity in occurrence time distribution}
\label{sec:inhomog_occurrencetimes}

In the previous subsections, we have assumed that the first order interaction 
function $\beta$ of the point process $X$ of occurrence times remains 
constant over the entire sampling window $(0,1)$. In our final example, we 
relax this assumption in that we consider a `peak time' in which events are 
more likely to occur and investigate the effect on the conditional 
distribution of occurrences. More precisely, we take an area-interaction
model (\ref{eq:area_int}) with
\[
\beta(y) = \begin{cases}
            3 & y \in (0, c_1) \\
            5 & y \in [c_1, c_2) \\
            3 & y \in [c_2, 1)
        \end{cases} 
        \]
and critical range  $[c_1, c_2) = [0.81, 0.85)$. The radius of interaction
is set to $r=0.1$ and we consider both a regular $(\eta = -1.2)$ 
and a clustered $(\eta = 1.2)$ model. 

As in \cite{LiesMark23}, consider the set $\mathbf{u} = \{(0.45, 0.4), 
(0.51, 0), (0.58, 0)\}$ that contains one non-degenerate interval. Recall 
that the entries are parameterised as $(a,l)$, where $a$ is the starting point 
and $l$ is the length. Figure~\ref{fig:intensity_comparison} plots the 
conditional distribution of the occurrence time on the interval $[0.45, 0.85]$ 
given ${\mathbf{u}}$ for the regular and clustered model.

\begin{figure}[bht]
    \centering
    \begin{subfigure}[b]{.45\textwidth}
        \centering
        \includegraphics[width=\textwidth]{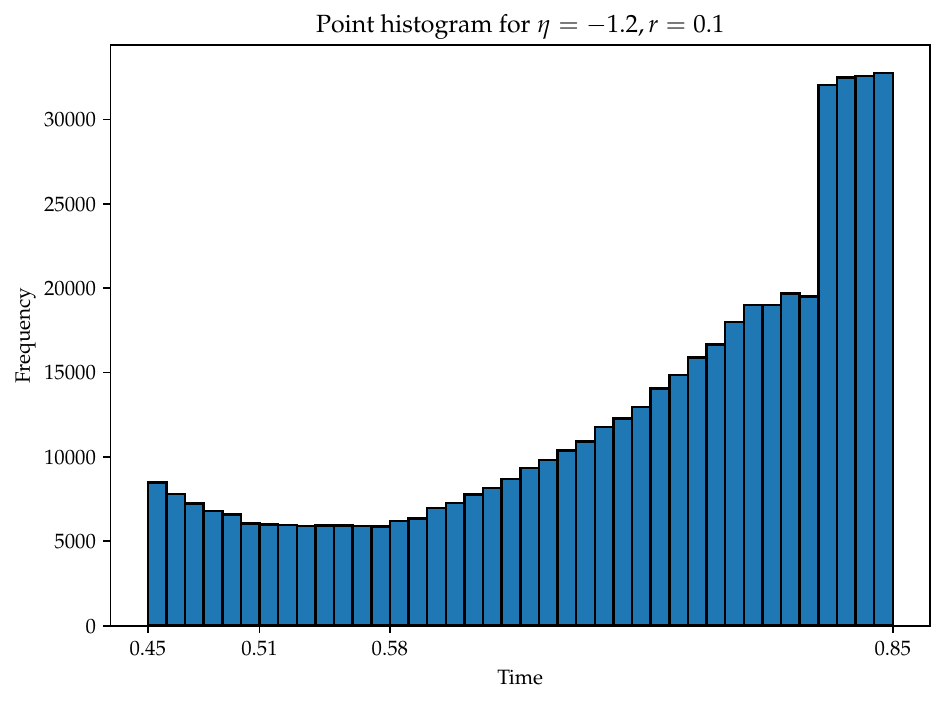}
    \end{subfigure}
    \begin{subfigure}[b]{.45\textwidth}
        \centering
        \includegraphics[width=\textwidth]{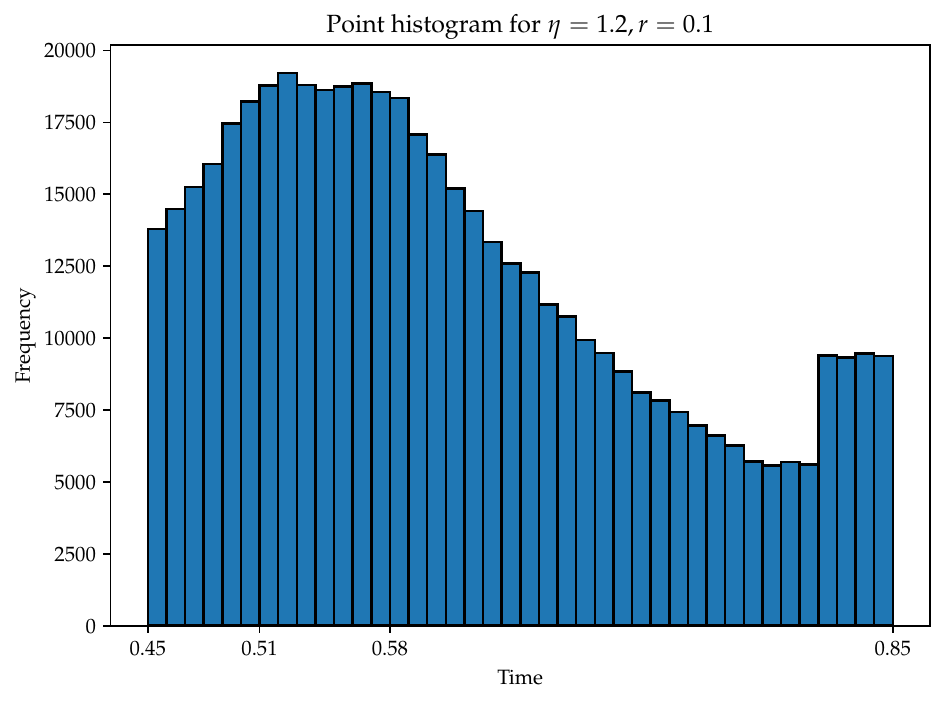}
    \end{subfigure}
    \caption{A comparison between a regular and clustered model 
    with a `peak time' added by changing the intensity function 
    within a critical range.}
    \label{fig:intensity_comparison}
\end{figure}

To create this figure, a Metropolis-Hastings algorithm (see 
Algorithm~4.2, \cite{LiesMark23}) has been run for 600,000 time steps, 
with the first 100,000 iterations being thrown out due to burn-in. 
The general shape of the graphs is similar to the corresponding plots
for constant $\beta = 3$ in Figures~2~and~3 in \cite{LiesMark23}. For the
clustered model, the occurrence time is more likely to happen close to
the atoms, for regular models the probability density is shifted away
from the atoms. In the non-homogeneous case, the higher value of $\beta$ 
during the peak times causes a clear bump in the range $[c_1, c_2) = 
[0.81, 0.85)$. 

\section{Conclusion}
\label{S:conclusion}

We introduced a time-dependent interval censoring mechanism that splits
time into observable and partially observable phases by means of a 
non-homogeneous semi-Markov process on the real line. The process was
shown to be well-defined for a range of Gamma and Weibull semi-Markov kernels.
We extended tools from renewal theory to derive families of time-dependent 
joint distributions of age and excess, which in turn characterise the 
probability distribution of censored intervals. We then constructed a model 
wherein a possibly non-homogeneous point process provides a mechanism 
to select points on the real line, which are independently marked by 
the intervals resulting from the censoring mechanism. For this model, 
a conditional distribution form was posited and verified. The influence
of the model components was demonstrated through parameterised examples.
In future, we intend to apply this model to data on domestic burglaries
and to add a spatial component.

\section*{Acknowledgement}
The research was funded by the Dutch Research Council NWO 
(OCENW.KLEIN.068).


\begin{thebibliography}{99}

\bibitem{Ashby13}
{\sc Ashby, M.P.J. and Bowers, K.J} (2013). 
A comparison of methods for temporal analysis of aoristic crime.
{\em Crime Sci.} {\bf 2}, 1--16.

\bibitem{Asmussen03}
{\sc Asmussen, S.} (2003).
{\em Applied Probability and Queues}.
Springer, New York.

\bibitem{Badd1995}
{\sc Baddeley, A.J. and Lieshout, M.N.M.~van} (1995).
Area-interaction point processes.
{\em Ann. Inst. Stat. Math.} {\bf 47}, 601--619.

\bibitem{Bernasco09}
{\sc Bernasco, W.} (2009).
Burglary. In:
M.H. Tonry (ed.), 
{\em The Oxford Handbook of Crime and Public Policy},
pp.~165--190.
Oxford University Press, Oxford.

\bibitem{Brooks11}
{\sc Brooks, S., Gelman, A., Jones, G. and Meng, X.L.} (2011).
{\em Handbook of Markov Chain Monte Carlo}.
CRC Press, Boca Raton.

\bibitem{Cinlar69}
{\sc {\c{C}}inlar, E.} (1969).
Markov renewal theory.
{\em Adv. Appl. Probab.} {\bf 1}, 123--187.

\bibitem{Daley03}
{\sc Daley, D.J. and Vere--Jones, D.} (2003).
{\em An Introduction to the Theory of Point Processes}.
{\em Vol.I Elementary Theory and Methods}, 2nd edn.
Springer, New York.

\bibitem{Geyer94}
{\sc Geyer, C.J. and M\o ller, J.} (1994).
Simulation procedures and likelihood inference for
spatial point processes.
{\em Scand. J. Stat.} {\bf 12}, 359--373.

\bibitem{Geye99}
{\sc Geyer, C.J.} (1999).
Likelihood inference for spatial point processes. In:
O. Barndorff--Nielsen, W.S. Kendall and M.N.M. van Lieshout
(eds.), {\em Stochastic Geometry: Likelihood and Computation},
pp.\ 79--140.
CRC Press, Boca Raton.

\bibitem{Haezendonck80}
{\sc Haezendonck, J. and De Vylder, F.} (1980).
A comparison criterion for explosions in point processes.
{\em J. Appl. Probab.} {\bf 17}, 1102--1107.

\bibitem{Janssen13}
{\sc Janssen, J.} (2013).
{\em Semi-Markov Models: Theory and Applications.}
Springer, New York.

\bibitem{Janssen06}
{\sc Janssen, J. and Manca, R.} (2006).
{\em Applied Semi-Markov Processes}. 
Springer, New York.

\bibitem{VedelJensen01}
{\sc Jensen, E.B.V. and Nielsen, L.S.} (2001).
A review of inhomogeneous Markov point processes.
In: I.V. Basawa, C.C. Heyde and R.L. Taylor (eds.), 
{\em Selected Proceedings of the Symposium on Inference 
for Stochastic Processes}, pp.~297--318.
Springer, New York.

\bibitem{Kendall98}
{\sc Kendall, W.S.} (1998).
Perfect simulation for the area-interaction point process.
In: L. Accardi and C.C. Heyde (eds.), {\em Probability Towards
2000}, pp.~218--234.
Springer, New York.

\bibitem{Karr91}
{\sc Karr, A.F.} (1991).
{\em Point Processes and their Statistical Inference}, 
2nd edn. 
Dekker, New York.

\bibitem{KoroSwis95}
{\sc Korolyuk, V.S. and Swishchuk, A.} (1995).
{\em Semi-Markov Random Evolutions}. 
Springer, Dordrecht.

\bibitem{Lies00}
{\sc Lieshout, M.N.M.~van} (2000).
{\em Markov Point Processes and their Applications.}
Imperial College Press, London.

\bibitem{LiesMark23}
{\sc Lieshout, M.N.M.~van and Markwitz, R.L.} (2023).
State estimation for aoristic models.
{\em Scand. J. Stat.} {\bf 50}, 1068--1089.

\bibitem{MeynTweed09}
{\sc Meyn, S. and Tweedie, R.L.} (2009).
{\em Markov Chains and Stochastic Stability}, 2nd edn.
Cambridge University Press, Cambridge. 

\bibitem{Moller04}
{\sc M{\o}ller, J. and Waagepetersen, R.P.} (2004).
{\em Statistical Inference and Simulation for Spatial 
Point Processes}.
CRC Press, Boca Raton.

\bibitem{Ratcliffe02}
{\sc Ratcliffe, J.H.} (2002). 
Aoristic signatures  and the spatio-temporal analysis of 
high volume crime patterns.
{\em  J. Quant. Criminol.} {\bf 18}, 23--43.

\bibitem{Ratcliffe98}
{\sc Ratcliffe, J.H. and McCullagh, M.J.} (1998).
Aoristic crime analysis. 
{\em Int. J. Geogr. Inf. Sci.}  {\bf 12}, 751--764.

\bibitem{RipleyKelly77}
{\sc Ripley, B.D. and Kelly, F.P.} (1977). 
Markov point processes. 
{\em J. Lond. Math. Soc.} {\bf 15}, 188--192.

\bibitem{Ross96}
{\sc Ross, S.M.} (1996).
{\em Stochastic Processes}.
Wiley, Hoboken.


\end{thebibliography}
\end{document}